\newtheorem{thm}{Theorem}[section]
\newtheorem{lem}[thm]{Lemma}
\newtheorem{cor}[thm]{Corollary}
\theoremstyle{definition}
\newtheorem{defi}[thm]{Definition}
\theoremstyle{definition}
\newtheorem{rem}[thm]{Remark}
\theoremstyle{definition}
\newcommand{\bE}{{\mathbb E}}
\newcommand{\bP}{{\mathbb P}}
\newcommand{\cA}{{\mathcal A}}
\newcommand{\cT}{{\mathcal T}}
\newcommand\ring[1]{\mathaccent23{#1}}
\def\Vr{\ring{V}}
\begin{document}
%
%

\thispagestyle{plain}
  \title{The conditioned reconstructed process}
  
  \author{Tanja Gernhard \\ 
    \date{}
   {\small Department of Mathematics, Kombinatorische Geometrie (M9), TU M\"{u}nchen } \\ 
   {\small Bolzmannstr. 3, 85747 Garching, Germany} \\ 
   {\small Phone +49 89 289 16882, Fax +49 89 289 16859, gernhard@ma.tum.de}\\
  }
  \maketitle


\doublespacing

\begin{abstract}
We investigate a neutral model for speciation and extinction, the constant rate birth-death process. The process is conditioned to have $n$ extant species today, we look at the tree distribution of the reconstructed trees-- i.e. the trees without the extinct species. Whereas the tree shape distribution is well-known and actually the same as under the pure birth process, no analytic results for the speciation times were known. We provide the distribution for the speciation times and calculate the expectations analytically. This characterizes the reconstructed trees completely. We will show how the results can be used to date phylogenies. 
\end{abstract}

{\bf Keywords}: Phylogenetics, macroevolutionary models, birth-death process, reconstructed process.

\section{Introduction}
Phylogenetics is the science of reconstructing the evolutionary history of lineages (usually species). Besides providing data for systematics and for taxonomy, phylogenies are the pattern of past diversification and so can be analysed to infer past macroevolutionary process. The first common step is to compare the reconstructed trees with expectations from neutral models of diversification \citep{Gould1977,Mooers1997,Nee1992,Raup1973}.  The simplest class of neutral model are entirely homogeneous, and assume that throughout time, whenever a speciation (or extinction) event occurs, each species is equally likely to be the one undergoing that event. Of course speciation is not just random -- lineages will differ in their expected diversification rates for both instrinsic and extrinsic factors \citep{Mooers2007}. However, a neutral model is often used as a null model to analyze the data, with departures pointing the way to more sophisticated scenarios \citep{Harvey1994}.

We investigate the constant rate birth-death process \citep{Feller1968,Kendall1948b} as it is probably the most popular homogeneous model.
A birth-death process is a stochastic process which starts with an initial species. A species gives birth to a new species after exponential (rate $\lambda$) waiting times and dies after an exponential (rate $\mu$) waiting time. Throughout this paper, we will have $0 \leq \mu \leq \lambda$.
 In the following, time $0$ is today and $t_{or}$ the origin of the tree, so time is increasing going into the past. Special cases of the birth-death process are the Yule model \citep{Yule1924} where $\mu = 0$ and the critical branching process \citep{AlPo2005, Popovic2004} where $\mu = \lambda$.
When looking at phylogenies, we have a given number, say $n$, of extant taxa. We therefore condition the process to have $n$ species today, we call that process the conditioned birth-death process (cBDP). The age of the tree, i.e. the time since origin of the birth-death process is $t_{or}$; if $t_{or}$ is not known, we assume a uniform prior on $(0,\infty)$ for the time of origin as it has been done in \citet{AlPo2005, Popovic2004}.
Note that a tree which evolved under a birth-death process includes extinct species, it is called the complete tree. 
From the complete tree, delete the extinct lineages. This is the reconstructed tree shape, see Figure \ref{FigLineageTree}. Label its leaves uniformly at random (since each species evolves in the same way). The resulting tree is called the reconstructed tree. Note that when reconstructing a phylogeny from (molecular) data, we see the reconstructed tree. Extinct lineages are only apparent when the fossil record is included.

\begin{figure}[!h]
\begin{center}
\includegraphics{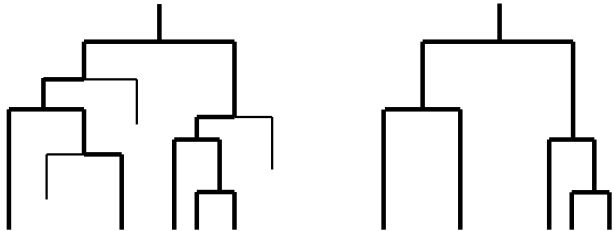}
\caption{A complete tree (left) and its reconstructed tree shape (right).}
\label{FigLineageTree}
\end{center}
\end{figure}

In \citet{Nee1994}, the reconstructed tree of a birth-death process after time $t$ is discussed. However, in this paper we additionally condition on having $n$ extant species, since this allows us to compare the model with phylogenies on $n$ extant species.
We will obtain the probability for each speciation event in a tree with $n$ species.
This has been done for the Yule model and the conditioned critical branching process (cCBP) in \citet{Gernhard2007} (note that the conditioned critical branching process is the critical branching process conditioned on $n$ extant species).
For the general birth-death process, the joint probability for the shape and all speciation times has been established in \citet{Rannala1996}; the joint probability for the speciation times disregarding the shape has been established in \citet{Yang1997}. However, no individual probabilities have been established.

For establishing the individual probabilities, we introduce the point process representation for reconstructed trees (Section \ref{SecPointProc}). This had been done for the critical branching process in \citet{AlPo2005, Popovic2004}. In Section \ref{SecTimeOri}, we calculate the probability distribution of the age of a given tree on $n$ species, assuming a uniform prior on $(0,\infty)$ for the age of the tree. This enables us to derive the density function for the time of the $k$-th spceciation event in a tree with $n$ extant species (Section \ref{DensSpec}) and its expectation (Section \ref{MomSpec}) -- assuming a uniform prior or conditioning on the age of the tree.
In Section \ref{SecProperties}, we discuss some further properties of reconstructed trees. We will determine the point process when not conditioning the cBDP on the time of origin. Also, we describe the point process of the coalescent, the neutral model in population genetics. Further, we will discuss the backwards process of reconstructed trees. The backward process is the process of the coalescence of the extant species.

Knowing the time of the $k$-th speciation event in a reconstructed tree with $n$ species allows us to calculate the time of a given vertex in the reconstructed tree \citep{Gernhard2006Rank,Gernhard2007}. This becomes useful for dating phylogenies. If we are able to reconstruct the phylogeny of extant species, but do not obtain speciation times, we can use the expected time of a speciation event as an estimate for the speciation time. This estimate has been used for the undated vertices in the primate phylogeny \citep{Vos2006}, assuming the Yule model. Simulations were used for obtaining the expected speciation times. We provide analytic results assuming any constant rate birth-death model. 
The methods are implemented in python as part of our PhyloTree package and can be downloaded at http://www-m9.ma.tum.de/twiki/pub/Allgemeines/TanjaGernhard/PhyloTree.zip.

In mathematical terms, a {\it reconstructed} tree is a rooted, binary tree with unique leaf labels and ultrametric edge lengths assigned, i.e. the distance from any leaf to the root is the same, see Figure \ref{FigPointProcess}, left tree. We denote the set of interior leaves by $\Vr$. A {\it ranked reconstructed tree} is a reconstructed tree without edge lengths but with a rank function defined on the interior leaves. 
A rank function is a bijection from $\Vr \rightarrow \{1,2,\ldots,|\Vr|\}$ where the ranks are increasing on any path from the root to the leaves.
A {\it(ranked) reconstructed tree shape} is a (ranked) reconstructed tree without leaf labels. 
A {\it (ranked) oriented tree} is a (ranked) reconstructed tree without leaf labels but where we distinguish between the two daughter edges of the interior vertices, w.l.o.g. label them $l$ and $r$, see Figure \ref{FigPointProcess}, middle tree. Note that a (ranked) oriented tree has $n!$ possible labelings.
We introduce the oriented tree to make the proofs clearer and the statements easier. 
\begin{rem} \label{RemRecTrees}
The cBDP induces a (ranked) reconstructed tree in the following way. Consider the complete tree which evolved under the cBDP. We delete the extinct lineages and label the $n$ leaves uniformly at random with $\{1,2,\ldots,n\}$ to obtain the reconstructed tree (there are $n! 2^{-k}$ possible labelings, where $k$ is the number of cherries in the reconstructed tree shape).

The interior vertices shall be ordered according to the time of speciation, this defines the rank function.
To make the reconstructed tree oriented, for each interior vertex, we label the two daughter lineages with $l$ and $r$ uniformly at random, there are $2^{n-1}$ possibities. We then ignore the leaf labelings (note that each labeling of the oriented tree is equally likely, since each labeling of the reconstructed tree was equally likely).

On the other hand, if we know the distribution on (ranked) oriented trees induced by the cBDP, we obtain the distribution on (ranked) reconstructed trees in the following way.  We choose a labeling of the leaves with $\{1,2,\ldots,n\}$ uniformly at random from the $n!$ possible labelings. We then ignore the orientation. This gives us back the distribution on (ranked) reconstructed tree.
Therefore, it is sufficient to determine the distribution on (ranked) oriented trees in order to determine the distribution on (ranked) reconstructed trees.
Overall, let $\tau_r$ be a reconstructed tree, and let $\tau_o$ be a oriented tree which was induced by $\tau_r$. Then $\bP[\tau_r] = \bP[\tau_o] 2^{n-1}/n!$, since a oriented tree has $n!$ possible labelings and for the $n-1$ interior vertices, we have the distinction between the $l$ and $r$ daughter branches.
\end{rem}


\section{The point process} \label{SecPointProc}
In this section, we provide the density for the time of a speciation event in the reconstructed tree given $n$ species today and the time of origin being at time $t_{or}$ in the past. We do that using a point process representation.
%
%
The following point process has first been considered in connection with trees in \citet{AlPo2005,Popovic2004}.
\begin{defi}
A point process for $n$ points and of age $t_{or}$ is defined as follows. Draw the $n$ points on the horizontal axis at $1,2,\ldots,n$. Now pick $n-1$ points to be at the location $(i + 1/2,s_i)$, $i=1,2, \ldots, (n-1); 0<s_i<t_{or}$.
\end{defi}

\begin{lem} \label{LemBij}
We have a bijection between oriented trees of age $t_{or}$ and the point process of age $t_{or}$.
\end{lem}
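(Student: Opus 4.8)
The plan is to write the bijection down by hand and check the two composites are the identity by induction on $n$, using throughout that an oriented tree carries ultrametric edge lengths, hence a height $h_v\in(0,t_{or})$ for each of its $n-1$ interior vertices, plus a left/right label at each such vertex. First I would go from a point process to an oriented tree. Given points $(i+1/2,s_i)$, $i=1,\dots,n-1$, with $0<s_i<t_{or}$, assume the $s_i$ pairwise distinct (the exceptional set $\{s_i=s_j:i\ne j\}$ has Lebesgue measure zero, which is harmless for the density statements this lemma serves; one may alternatively declare the point process to have distinct heights). Define $\Phi$ recursively: for $n=1$ it is the single leaf; for $n\ge 2$, let $m$ be the unique index with $s_m=\max_i s_i$, and create a root at height $s_m$ whose left daughter subtree is $\Phi(s_1,\dots,s_{m-1})$ on leaves $1,\dots,m$ and whose right daughter subtree is $\Phi(s_{m+1},\dots,s_{n-1})$ on leaves $m+1,\dots,n$, with edge lengths induced by the heights. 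Each step adds exactly one interior vertex, so $\Phi$ returns a rooted binary tree with $n$ leaves; all leaves sit at height $0$, so the edge lengths are ultrametric; the root of each subtree, being a maximum over a strict subset, has height $<s_m$, so heights decrease from the root toward the leaves and stay in $(0,t_{or})$; and the orientation is the one recorded by the recursion. Hence $\Phi$ outputs an oriented tree of age $t_{or}$.

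Conversely, given an oriented tree $\tau_o$ of age $t_{or}$, read off its leaves left to right in the planar order induced by the orientation (at each vertex the $l$-subtree before the $r$-subtree), label them $1,\dots,n$, and place leaf $k$ at $(k,0)$. In such an ordered binary tree the leaves of every subtree form a contiguous block, so sending an interior vertex $v$ to the index of the rightmost leaf of its left subtree is an injection, hence a bijection, from the $n-1$ interior vertices onto $\{1,\dots,n-1\}$; moreover, for that index $i$, the vertex $v$ is precisely the most recent common ancestor of the consecutive leaves $i$ and $i+1$. Put the point for $v$ at $(i+1/2,h_v)$, with $h_v\in(0,t_{or})$ the common distance of $v$'s leaves to $v$. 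This gives the candidate inverse map $\Psi$, landing in the point process of age $t_{or}$.

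It remains to check the composites. For $\Psi\circ\Phi$: the vertex $\Phi$ creates at split position $m$ has $m,m+1$ as its straddling consecutive pair and height $s_m$, so $\Psi$ restores the point $(m+1/2,s_m)$, and the leaf order read off from $\Phi$'s orientation is $1,\dots,n$; thus every original point is recovered. For $\Phi\circ\Psi$: since edge lengths are positive, $h_v$ strictly decreases along root-to-leaf paths, so (heights distinct) the root of $\tau_o$ is the unique interior vertex of maximal height, say with straddling index $m$; the split $\{1,\dots,m\}\mid\{m+1,\dots,n\}$ that $\Phi$ uses is exactly the left/right partition of the leaves at the root of $\tau_o$, by construction of the leaf order; recursing on the two subtrees and invoking the induction hypothesis rebuilds $\tau_o$ with its orientation. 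Hence $\Phi$ and $\Psi$ are mutually inverse.

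The only points needing care — and where I expect the actual work to sit — are the combinatorial claim that interior vertices correspond to consecutive leaf pairs via the MRCA (so that $\Psi$ genuinely produces a point process) and its compatibility with the max-split defining $\Phi$; both drop out of the one induction on $n$ above. Conceptually nothing is deep: this is just the classical correspondence between ultrametrics on a finite set and rooted trees with edge lengths, here with the ultrametric $d(a,b)=2\max_{a\le i<b}s_i$ on the ordered leaf set and the orientation encoding that order. The one caveat worth a sentence in the proof is the measure-zero coincidental-height set, on which the recursion is ambiguous but which never affects the density or expectation computations this lemma is used for.
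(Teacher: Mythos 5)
Your proposal is correct and follows essentially the same construction as the paper: the map from trees to points via the planar embedding induced by the $l/r$ labels, with the inverse recovering the tree from the heights of consecutive-leaf MRCAs (the paper builds the inverse leaf-up by repeatedly joining at the most recent point, which is equivalent to your root-down max-split recursion). You simply supply the inductive verification and the measure-zero caveat about coincident heights that the paper dismisses as ``obviously injective and surjective.''
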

\begin{proof}
Draw the given oriented tree from the top to the bottom. At each speciation event, choose the branch with label $r$ to be on the right and with label $l$ on the left. On a horizontal axis, the leaves are located at position $1, 2, \ldots n$. The speciation events are at the location $(i + 1/2,s_i)$, $i=1,2, \ldots, (n-1); 0<s_i<t_{or}$. These are the $n-1$ points of the point process. The mapping to the point process is obviously injective and surjective, i.e. bijective.
\end{proof}
For completeness, we give the mapping from the point process to the oriented trees.
Consider a realization of the point process. Connect the most recent speciation event with the two neighboring leaves. This speciation event replaces the two neighboring leaves in the leaf set. Continue in this way until all points are connected. This gives us the corresponding oriented tree.
An example of the point process is given in Figure \ref{FigPointProcess}.

\begin{figure}[!h]
\begin{center}
\includegraphics[scale=0.8]{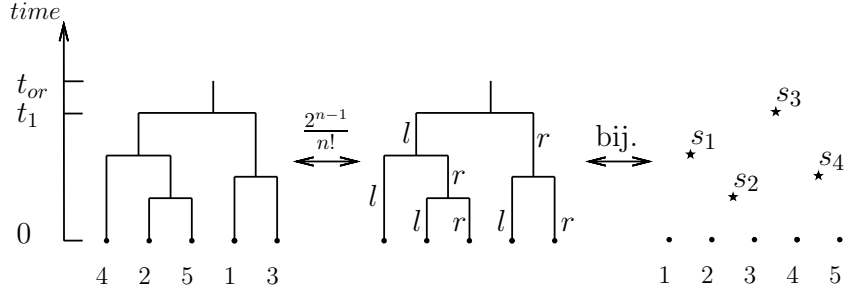}
\caption{Reconstructed tree, oriented tree and the corresponding point process. The time of origin of the process is $t_{or}$, the time of the most recent common ancestor is $t_1$.}
\label{FigPointProcess}
\end{center}
\end{figure}

\begin{thm} \label{ThmTreeShape}
Each ranked oriented tree on $n$ leaves induced by the constant rate birth and death process has equal probability. 
Note that this is true with or without conditioning on the time since origin.
\end{thm}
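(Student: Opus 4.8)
The plan is to build the reconstructed tree forwards in time and to show that each of its $n-1$ branching events attaches to a uniformly random one of the lineages present at that moment, independently of the past and of the left/right labelling assigned to the new vertex; since that labelling is done by an independent fair coin (Remark~\ref{RemRecTrees}), this forces every ranked oriented tree on $n$ leaves to be equally likely. To set this up, run the cBDP forwards from the origin at $t_{or}$, call a lineage alive at time $t$ \emph{surviving} if it has at least one descendant among the $n$ species present at time $0$, and let $k(t)$ be the number of surviving lineages at $t$. Then $k(t_{or})=1$, $k$ increases by exactly one at each vertex of the reconstructed tree (a birth both of whose daughter lineages are surviving) and is unchanged otherwise, and $k(0)=n$. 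As a ranked oriented tree, the reconstructed tree is completely described by the order in which its $n-1$ vertices appear --- this is the rank function --- together with, at the vertex raising $k$ from $j$ to $j+1$, which of the $j$ surviving lineages splits and which daughter is labelled $l$. The $l/r$ choices are i.i.d.\ fair coins, independent of everything else, by Remark~\ref{RemRecTrees}, so it remains to prove: conditionally on the reconstructed tree above a time $t$ with $k(t)=j$, the $j$-th vertex attaches to each of the $j$ surviving lineages with probability $1/j$.

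For this, fix such a time $t$ and condition on the complete tree down to $t$, on which lineages alive at $t$ are surviving --- this pins down the reconstructed tree above $t$, the number $j$, and a canonical left-to-right labelling $\ell_1,\dots,\ell_j$ of the surviving lineages --- and on the global event that there are $n$ extant species. By the Markov property the evolutions of $\ell_1,\dots,\ell_j$ over $[t,0]$ are i.i.d.\ copies of the birth-death process, and the remaining conditioning, namely ``every $\ell_i$ has at least one descendant at time $0$ and the descendant counts sum to $n$'', is invariant under permutations of $\{1,\dots,j\}$; hence these evolutions are \emph{exchangeable}. The index of the surviving lineage hosting the next reconstructed vertex --- the first $\ell_i$ in whose clade a birth with two surviving daughters occurs --- is a symmetric function of this exchangeable family, so it is uniform on $\{1,\dots,j\}$ (ties have probability $0$), and this is unaffected by the conditioning data beyond the values of $j$ and $n$. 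Averaging out that extra data gives the displayed claim.

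Iterating over $j=1,\dots,n-1$ and interleaving the independent fair orientation coins, the reconstructed tree is generated by mutually independent uniform choices, so every ranked oriented tree on $n$ leaves receives the same probability; a routine count of ranked oriented trees gives the value $1/(n-1)!$. Nothing in the argument used the value of $t_{or}$ --- the symmetry among i.i.d.\ lineages in the exchangeability step holds for every fixed $t_{or}$ --- so the induced distribution on ranked oriented trees is this uniform one whether or not we condition on the time since origin; in particular integrating against the uniform prior on $t_{or}\in(0,\infty)$ changes nothing.

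The crux, and the step I expect to require the most care, is the exchangeability argument: ``surviving'' refers to the future, so one must be precise about what is being conditioned on, and --- more to the point --- the event ``exactly $n$ extant species'' couples the $j$ surviving sub-processes, so that they are not independent but only exchangeable; it is exactly the permutation-invariance of that event that keeps the next branching uniform over the $j$ lineages. One could instead deduce the statement from the joint density of shape and speciation times computed in \citet{Rannala1996}, whose dependence on the shape is constant, or, using the point process of Lemma~\ref{LemBij}, by an induction on $n$ that peels off the most recent common ancestor and uses that the root split into ordered clade sizes is uniform; but the forward argument above is self-contained and exhibits the mechanism directly.
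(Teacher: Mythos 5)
Your proof is correct, but it runs in the opposite direction from the paper's. The paper argues \emph{backwards}: it asserts that the coalescent of the $n$ extant species merges, at each step, a uniformly random ordered pair of the current lineages, counts the $n!\,(n-1)!$ equally likely coalescent scenarios, and divides by the $n!$ leaf labelings to obtain $1/(n-1)!$ per ranked oriented tree. You argue \emph{forwards}: each vertex of the reconstructed tree attaches to a uniformly chosen surviving lineage, which you justify by noting that the i.i.d.\ futures of the lineages alive at time $t$, conditioned on the permutation-invariant event ``these $j$ survive, the others do not, and the descendant counts sum to $n$,'' remain exchangeable. Both proofs ultimately rest on the same neutrality symmetry, but yours supplies the justification that the paper's one-line ``each one being equally likely'' takes for granted, namely that uniformity is not destroyed by conditioning on the future event of having exactly $n$ extant species (the conditioning couples the subprocesses, but only exchangeably), and it makes explicit that the result holds for every fixed $t_{or}$ and hence under any prior. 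The cost is length and two points that would need polish in a final write-up: the index of the lineage hosting the next vertex is an \emph{equivariant}, not symmetric, function of the exchangeable family (the standard argument that the argmin of exchangeable variables is uniform, which is what you actually use, still applies since ties have probability zero); and the chaining over $j=1,\dots,n-1$ conditions at a deterministic time $t$ on the event $k(t)=j$ rather than at the time of the $(j-1)$-st vertex, which is not a stopping time for the forward filtration --- your formulation is the right way around this, but the iteration deserves a sentence. Your closing count $\prod_{j=1}^{n-1}(1/j)=1/(n-1)!$ agrees with the paper's.
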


\begin{proof}
We first determine the probability of a ranked oriented tree with leaf labels. Consider the process backwards. We have $n$ species today. We pick two species uniformly at random, which coalesce first. The one new branch gets label $l$ and the other new branch gets label $r$. Overall there are $n(n-1)$ possible choices for the first coalescent event, each one being equally likely. The two chosen leaves are replaced with their common ancestor. We proceed in this way until all species are connected. There are $n! (n-1)!$ possible scenarios for the coalescent, each one being equally likely, i.e. a ranked oriented tree with leaf labels has probability $\frac{1}{n! (n-1)!}$. Each of the $n!$ possible labelings are equally likely, therefore the probability of a ranked oriented tree is $\frac{1}{(n-1)!}$.
This is the uniform distribution on ranked oriented trees of size $n$.
\end{proof}

\begin{cor} \label{CorPermutation}
Each permutation of the $n-1$ speciation points $s_1, \ldots, s_{n-1}$ in the point process of the birth-death process has equal probability.
\end{cor}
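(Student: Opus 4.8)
The plan is to transport the uniformity of Theorem~\ref{ThmTreeShape} along the bijection of Lemma~\ref{LemBij}. Fix $n-1$ pairwise distinct values $0<u_1<u_2<\cdots<u_{n-1}<t_{or}$ and, for each permutation $\pi\in S_{n-1}$, consider the point process whose point in slot $i$ sits at height $u_{\pi(i)}$. Under the correspondence of Lemma~\ref{LemBij} this point process maps to a ranked oriented tree $T_\pi$, because the reconstruction rule --- repeatedly joining the currently lowest point to its two neighbouring leaves --- depends only on the \emph{relative order} of the heights, not on their precise values. First I would check that $\pi\mapsto T_\pi$ is a bijection from $S_{n-1}$ onto the set of ranked oriented trees on $n$ leaves: injectivity, because the relative order of the $u_{\pi(i)}$ recovers $\pi$; surjectivity, because both sides have cardinality $(n-1)!$ (the count $(n-1)!$ for ranked oriented trees being exactly what the proof of Theorem~\ref{ThmTreeShape} establishes).

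Next I would upgrade Theorem~\ref{ThmTreeShape} to the conditional statement that, given the unordered set $\{s_1,\dots,s_{n-1}\}$ of speciation times, the induced ranked oriented tree is uniform on the $(n-1)!$ ranked oriented trees. This is visible from the backward construction used to prove Theorem~\ref{ThmTreeShape}: that argument selects the discrete coalescent structure --- at each step a coalescing pair uniformly at random --- without ever referring to the speciation times, and for the constant rate birth--death process this embedded sequence of pair choices is made independently of when the coalescences occur, so conditioning on the times leaves the discrete structure uniform. Combining this with the bijection of the previous paragraph: conditional on $\{s_1,\dots,s_{n-1}\}=\{u_1,\dots,u_{n-1}\}$, each of the $(n-1)!$ assignments of these values to the slots $1,\dots,n-1$ is equally likely. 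Averaging over the (unordered) set of times then shows that the density of $(s_1,\dots,s_{n-1})$ is invariant under permuting its coordinates, which is the assertion.

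The step I expect to be the real obstacle is the second one: making precise that for the cBDP the reconstructed tree shape is independent of the vector of ordered speciation times, equivalently that the conditional law of the ranked oriented tree given the times is still uniform. Everything else is bookkeeping --- the bijection count, and the observation that the point-process reconstruction uses only the order of the heights. If one is content with the purely combinatorial reading of the corollary, namely that the $(n-1)!$ ranked oriented trees arising from a \emph{fixed} set of heights are equally likely, then it is an immediate consequence of Lemma~\ref{LemBij} and Theorem~\ref{ThmTreeShape} with no further work.
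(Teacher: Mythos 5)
Your proposal is correct and follows essentially the same route as the paper: the published proof is precisely the short combinatorial argument you describe in your final sentence, transporting the uniformity of Theorem~\ref{ThmTreeShape} through the bijection of Lemma~\ref{LemBij} by observing that permuting the fixed heights produces another ranked oriented tree of the same probability. The subtlety you flag --- that one really needs the ranked oriented tree to be uniform \emph{conditionally} on the set of speciation times, not merely marginally --- is genuine but is passed over silently in the paper's own proof as well, so your more careful version is a strengthening of the same argument rather than a different one.
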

\begin{proof}
We have a bijection between the oriented trees and the point process (Lemma \ref{LemBij}).
Choosing the $n-1$ speciation points $s_1, \ldots, s_{n-1}$ induces a ranked oriented tree, let the probability of that tree be $p$.
Now permute the $n-1$ speciation points arbitrary. This induces a different ranked oriented tree. Since we have a uniform distribution on ranked oriented trees (Theorem \ref{ThmTreeShape}), the probability of the new tree is again $p$.
So each permutation is equally likely.
\end{proof}
For obtaining the density of the speciation time $s_i$, we need the following results.
Under a birth-death process, the probability that a lineage leaves $n$ descendants after time $t$ is $p_n(t)$. From \citet{Kendall1949}, we know
\begin{eqnarray}
p_0(t) &=& \frac{\mu (1-e^{-(\lambda-\mu)t})}{\lambda-\mu e^{-(\lambda-\mu)t}}, \notag \\
p_1(t) &=& \frac{(\lambda-\mu)^2 e^{-(\lambda-\mu)t}}{(\lambda-\mu e^{-(\lambda-\mu)t})^2}, \notag \\
p_n(t) &=& (\lambda / \mu)^{n-1} p_1(t) [p_0(t)]^{n-1}. \label{EqnPnt}
\end{eqnarray}
Let $\tau$ be the oriented tree with $n$ leaves and $x_1 > x_2 > \ldots > x_{n-1}$ the time of the speciation events.
Note that the $x_i, i =\{1,2, \ldots, n-1\}$ is the order statistic of the $s_i, i =\{1,2, \ldots, n-1\}$.

%
In \citet{Rannala1996}, joint probabilities for $x_1, \ldots, x_{n-1}$ are given.
The authors condition on the time $t_1$, the time since the most recent common ancestor ($mrca$) of the extant species.
From \citet{Yang1997}, Equation (3), we obtain the density $g$ of the ordered speciation times, $x_2 > x_3 > \ldots > x_{n-1}$, given $n$ and $x_1=t_1$,
\begin{equation}
g(x_2,x_3,\ldots,x_n|t_1=t,n) =  (n-2)! \prod_{i=2}^{n-1} \mu \frac{p_1(x_i)}{p_0(t)}. \notag
\end{equation}
The variables $x_2,x_3,\ldots,x_n$ are the order statistic of say $s_2,s_3,\ldots,s_{n-1}$.
Each permutation of the $n-2$ random variables $s_2,s_3,\ldots,s_{n-1}$ has equal probability (Corollary \ref{CorPermutation}), and therefore the density $f$ of the speciation times is,

$$f(s_2, \ldots, s_{n-1}|t_1=t,n) = \frac{g(x_2,x_3,\ldots,x_n|t_1=t,n)}{(n-2)!} = \prod_{i=2}^{n-1} \mu \frac{p_1(s_i)}{p_0(t)} $$
which (by definition of independence) shows that the $s_i$ are i.i.d., and therefore,
\begin{equation} \label{Eqnt1}
f(s_i|t_1=t,n) = \mu \frac{p_1(s_i)}{p_0(t)} = (\lambda-\mu)^2 \frac{e^{-(\lambda-\mu)s_i}}{(\lambda-\mu e^{-(\lambda-\mu)s_i})^2} \frac{\lambda-\mu e^{-(\lambda-\mu)t}}{1 - e^{-(\lambda-\mu)t}}.
\end{equation}
Note that the expression for the density of $s_i$ does not depend on $n$, we have the same distribution for any $n$. Therefore, we do not need to condition on $n$.
%
%
%
For the distribution, we obtain by integrating Equation (\ref{Eqnt1}) w.r.t. $s_i$,
\begin{equation} \label{Eqnt1Distr}
F(s_i|t_1=t) =  \frac{1-e^{-(\lambda-\mu)s_i}}{\lambda-\mu e^{-(\lambda-\mu)s_i}} \frac{\lambda-\mu e^{-(\lambda-\mu)t}}{1 - e^{-(\lambda-\mu)t}}.
\end{equation}
Note that the probabilities are conditioned on $t_1$, the time of the $mrca$.
It is of interest to condition on $t_{or}$ instead, the time since $origin$ of the tree.
We have the-- maybe first seeming surprisingly-- property that
\begin{equation} \label{Eqntort1}
f(s_i|t_1=t) = f(s_i|t_{or}=t).
\end{equation}
%
%
%
%

\begin{figure}[!h]
\begin{center}
\includegraphics[scale=0.8]{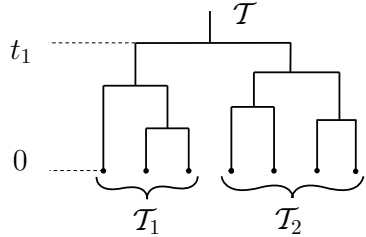}
\caption{Reconstructed tree $\cT$ with daughter trees $\cT_1$, $\cT_2$. We have $mrca(\cT) = origin(\cT_1) = origin(\cT_2) = t_1$.}
\label{FigMrcaOr}
\end{center}
\end{figure}

The following argument verifies Equation (\ref{Eqntort1}).
Suppose we have a tree where the $mrca$ was at time $t_1$. The daughter trees $\cT_n,\cT_m$ of the $mrca$ have $n,m$ extant species.
The speciation times in $\cT_n,\cT_m$ occured according to Equation (\ref{Eqnt1}).
On the other hand, since the two daughter trees of the $mrca$ evolve independently, the tree $\cT_n$ can be regarded as a birth-death process which is conditioned to have $n$ species today and the time of origin was $t_{or}=t$. Therefore $f(s_i|t_1=t) = f(s_i|t_{or}=t)$, see also Figure \ref{FigMrcaOr}.
With Remark \ref{RemRecTrees}, this establishes the following theorem.

\begin{thm} \label{ThmFor} \label{ThmIndeps}
The speciation times $s_1, \ldots, s_{n-1}$ in a oriented tree (reconstructed tree) with $n$ species conditioned on the age of the tree are i.i.d. The speciation times $s_2, \ldots, s_{n-1}$ in a oriented tree (reconstructed tree) with $n$ species conditioned on the mrca are i.i.d. The time $s$ of a speciation event given (i) the time since the origin of the tree is $t_{or}$, or (ii) the time since the $mrca$ is $t_1$, has the following density and distribution,
\begin{eqnarray*}
f(s|t_{or}=t) &=&  f(s|t_1=t) = 
\left\{
\begin{array}{ll}
     \frac{ (\lambda-\mu)^2 e^{-(\lambda-\mu)s}}{(\lambda-\mu e^{-(\lambda-\mu)s})^2} \frac{\lambda-\mu e^{-(\lambda-\mu)t}}{1 - e^{-(\lambda-\mu)t}}   & \hbox{if $s \leq t$,} \\
    0 & \hbox{else,} \\
\end{array}
\right.
\\
F(s|t_{or}=t) &=& F(s|t_1=t) =
\left\{
\begin{array}{ll}
 \frac{1-e^{-(\lambda-\mu)s}}{\lambda-\mu e^{-(\lambda-\mu)s}} \frac{\lambda-\mu e^{-(\lambda-\mu)t}}{1 - e^{-(\lambda-\mu)t}},   & \hbox{if $s \leq t$,} \\
    1 & \hbox{else.} \\
\end{array}
\right.
\end{eqnarray*}
\end{thm}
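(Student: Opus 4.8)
The plan is to assemble the statement from the three ingredients already isolated in the paragraphs preceding the theorem: Yang's joint density of the ordered speciation times given the $mrca$, the permutation-invariance of Corollary~\ref{CorPermutation}, and a ``daughter-subtree'' identity relating conditioning on the $mrca$ to conditioning on the origin.

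First I would treat conditioning on the $mrca$ at time $t_1=t$. Starting from the joint density $g(x_2,\dots,x_n\mid t_1=t,n)=(n-2)!\prod_{i=2}^{n-1}\mu\,p_1(x_i)/p_0(t)$ of the \emph{ordered} speciation times (Equation (3) of \citet{Yang1997}, as restated above), I would invoke Corollary~\ref{CorPermutation}: since the $(n-2)!$ orderings of $s_2,\dots,s_{n-1}$ are equally likely, the density of the unordered tuple is $g/(n-2)!=\prod_{i=2}^{n-1}\mu\,p_1(s_i)/p_0(t)$. This is a product of identical one-variable factors supported on $(0,t)$, so by the definition of independence the $s_i$ are i.i.d.\ with common density $\mu\,p_1(s)/p_0(t)$ for $s\le t$ and $0$ otherwise; plugging in the Kendall expressions (\ref{EqnPnt}) for $p_0,p_1$ gives the displayed $f(s\mid t_1=t)$. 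Integrating in $s$ — noting that $\frac{d}{ds}\frac{1-e^{-(\lambda-\mu)s}}{\lambda-\mu e^{-(\lambda-\mu)s}}$ equals the density up to the constant factor $\frac{\lambda-\mu e^{-(\lambda-\mu)t}}{1-e^{-(\lambda-\mu)t}}$ — yields $F(s\mid t_1=t)$, which equals $1$ once $s\ge t$.

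Next I would pass from $t_1$ to $t_{or}$. Fix a reconstructed (equivalently oriented) cBDP tree conditioned on origin $t_{or}=t$ with $n$ extant leaves. Realise it as one of the two daughter subtrees hanging off the $mrca$ of a larger cBDP tree whose $mrca$ sits at time $t$ and whose other daughter has, say, $m\ge 1$ leaves: by independence of the two daughter lineages of an $mrca$ together with the Markov branching property, the conditional law of our tree is exactly that of the cBDP conditioned on $n$ leaves and origin $t$. Its $n-1$ speciation times are a sub-collection of the variables $x_2,\dots$ of the larger tree, all of which lie strictly below that tree's $mrca$ at height $t$; by the previous paragraph these are i.i.d.\ with density $\mu\,p_1(\cdot)/p_0(t)$, and a sub-collection of i.i.d.\ (hence exchangeable) variables is again i.i.d.\ with the same marginal. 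Since this covers all $n-1$ internal vertices of the origin-conditioned tree, we get both the i.i.d.\ statement for $s_1,\dots,s_{n-1}$ conditioned on the age and the identity $f(s\mid t_{or}=t)=f(s\mid t_1=t)$, i.e.\ Equation~(\ref{Eqntort1}); the $mrca$ case for $s_2,\dots,s_{n-1}$ is already in the first paragraph.

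Finally, all of the above is stated for oriented trees, and Remark~\ref{RemRecTrees} transfers it verbatim to reconstructed trees, since the passage from an oriented tree to a reconstructed tree only rescales probabilities by the constant $2^{n-1}/n!$ and leaves the speciation times untouched. The step I expect to require the most care is the $mrca$-to-origin transfer: one must check that a daughter subtree of an $mrca$ really is distributed as an origin-conditioned process for \emph{every} leaf count (so the $n$-independence of the marginal in (\ref{Eqnt1}) applies), and one must be confident that because $g$ is already of pure product form with the \emph{same} denominator $p_0(t)$ in every factor — irrespective of how deep a speciation lies — the seemingly plausible ``$p_0$ of an intermediate height'' corrections genuinely do not appear. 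The remaining work — de-ordering via Corollary~\ref{CorPermutation}, the single antiderivative for $F$, and the bookkeeping of supports in the piecewise formulas — is routine.
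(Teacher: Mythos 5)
Your proposal is correct and follows essentially the same route as the paper: de-ordering Yang's joint density via Corollary~\ref{CorPermutation} to get the i.i.d.\ product form given the $mrca$, transferring to origin-conditioning by viewing the origin-conditioned tree as a daughter subtree of an $mrca$-conditioned tree (which is exactly the argument the paper gives around Figure~\ref{FigMrcaOr}), and passing to reconstructed trees via Remark~\ref{RemRecTrees}. The only difference is that you make explicit the exchangeability/sub-collection step and the need for the $n$-independence of the marginal, which the paper leaves implicit.
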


Since conditioning a tree to have the $mrca$ at time $t$  can be interpreted as conditioning the two daughter trees $\cT_n$ and $\cT_m$ of $\cT$ to have the $origin$ at time $t$, we will only condition on the origin of the tree in the following.


\subsection{Special models}
\subsubsection{The Yule model}
For the special case of a pure birth process, i.e. $\mu = 0$, which is the Yule model,
Equation (\ref{Eqnt1}) simplifies to
\begin{eqnarray*} 
f(s|t) &=&  \frac{\lambda e^{- \lambda s}}{1- e^{-\lambda t}}  \\
F(s|t) &=&  \frac{1- e^{- \lambda s}}{1- e^{-\lambda t}} 
\end{eqnarray*}
which has already been established in \citet{Nee2001}-- he conditioned on the time since the $mrca$ though.

\subsubsection{The conditioned critical branching process}
In a cCBP, we have $\lambda=\mu$. As $\mu \rightarrow \lambda$, we get in the limit using Equation (\ref{Eqnt1}), (\ref{Eqnt1Distr}) and (\ref{Eqntort1}), and the property $e^{-\epsilon} \sim 1-\epsilon$ for $\epsilon \rightarrow 0$,
\begin{eqnarray*}
f(s|t) &=&   \frac{1}{(1+ \lambda s)^2} \frac{1+\lambda t}{t}, \\
F(s|t) &=&   \frac{s}{1+ \lambda s} \frac{1+\lambda t}{t}.
\end{eqnarray*}
This has already been established in a different way for $\lambda = 1$ in \citet{AlPo2005,Popovic2004}.

\section{The time of origin} \label{SecTimeOri}

Suppose nothing is known about $t$, the time of origin of a tree. As in \citet{AlPo2005,Popovic2004}, we then assume a uniform prior on $(0,\infty)$, i.e. a tree is equally likely to origin at any point in time. Note that the prior does not integrate to $1$. For any constant function, the integral is $\infty$. Therefore the prior is not a density. Such a prior is called improper; a discussion and justification is found e.g. in \citet{Berger1980}. Assuming the uniform prior, we will establish the density for $t$ given $n$ extant species.
From Equation (\ref{EqnPnt}), we have the probability of $n$ extant species given the time of origin is $t$,
$$\bP_{or}[n|t] = \lambda^{n-1} (\lambda-\mu)^2 \frac{(1-e^{-(\lambda-\mu)t)^{n-1}} e^{-(\lambda-\mu)t}}{(\lambda-\mu e^{-(\lambda-\mu)t})^{n+1}}.$$
In order to derive the density for $t$ given $n$, we need the following lemma.
\begin{lem} \label{LemIntPntor}
Let $\bP_{or}[n|t]$ be the probability that a tree has $n$ extant species given the time of origin $t$. We have
$$\int_0^\infty \bP_{or}[n|t] dt = \frac{1}{n \lambda}.$$
\end{lem}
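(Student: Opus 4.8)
The plan is to compute the integral $\int_0^\infty \bP_{or}[n|t]\,dt$ by a direct substitution that linearizes the exponential terms. I would set $u = e^{-(\lambda-\mu)t}$, so that $du = -(\lambda-\mu)e^{-(\lambda-\mu)t}\,dt$, i.e. $dt = -\frac{du}{(\lambda-\mu)u}$, and as $t$ ranges over $(0,\infty)$ the variable $u$ ranges over $(0,1)$. Substituting into
$$\bP_{or}[n|t] = \lambda^{n-1}(\lambda-\mu)^2 \frac{(1-e^{-(\lambda-\mu)t})^{n-1} e^{-(\lambda-\mu)t}}{(\lambda-\mu e^{-(\lambda-\mu)t})^{n+1}}$$
turns the integral into a rational-function integral over $(0,1)$, namely a constant times $\int_0^1 \frac{(1-u)^{n-1}}{(\lambda-\mu u)^{n+1}}\,du$, after the factor $e^{-(\lambda-\mu)t}$ cancels against the Jacobian.

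The remaining task is the elementary integral $I_n := \int_0^1 \frac{(1-u)^{n-1}}{(\lambda-\mu u)^{n+1}}\,du$. I would evaluate this by a second substitution $v = \frac{1-u}{\lambda-\mu u}$, which is a Möbius map sending $u=0$ to $v = 1/\lambda$ and $u=1$ to $v=0$; one checks that $dv = \frac{-( \lambda-\mu)}{(\lambda-\mu u)^2}\,du$ and that $\frac{(1-u)^{n-1}}{(\lambda-\mu u)^{n+1}}\,du = -\frac{1}{\lambda-\mu}\,v^{n-1}\,dv$, so that $I_n = \frac{1}{\lambda-\mu}\int_0^{1/\lambda} v^{n-1}\,dv = \frac{1}{\lambda-\mu}\cdot\frac{1}{n\lambda^n}$. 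Multiplying back by the prefactor $\lambda^{n-1}(\lambda-\mu)^2$ gives $\frac{\lambda^{n-1}(\lambda-\mu)^2}{(\lambda-\mu)\, n\, \lambda^n} = \frac{1}{n\lambda}$, as claimed. (Alternatively, one can recognize $I_n$ after the first substitution as a Beta-type integral, or simply verify the antiderivative $\frac{-1}{(\lambda-\mu) n}\left(\frac{1-u}{\lambda-\mu u}\right)^n$ by differentiation, which is perhaps the cleanest write-up.)

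The main obstacle is purely bookkeeping: getting the exponents and the constant $\lambda-\mu$ to cancel correctly, and handling the apparent singularity at $\mu=\lambda$. Since the final answer $\frac{1}{n\lambda}$ is independent of $\mu$ and the intermediate expressions all have a removable $\frac{1}{\lambda-\mu}$ that cancels, the identity extends to the critical case $\mu=\lambda$ by continuity (or by redoing the computation directly with $p_n(t)$ in its $\mu\to\lambda$ limiting form). I would state the computation for $0\le\mu<\lambda$ and remark that the $\mu=\lambda$ case follows by taking the limit, consistent with how the critical branching process is treated elsewhere in the paper.
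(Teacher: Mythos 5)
Your proposal is correct and is essentially the paper's own argument: composing your two substitutions gives $v = \frac{1-e^{-(\lambda-\mu)t}}{\lambda-\mu e^{-(\lambda-\mu)t}}$, and the antiderivative $\frac{-1}{(\lambda-\mu)n}\left(\frac{1-u}{\lambda-\mu u}\right)^n$ you verify at the end is, in the original variable, exactly the function $\frac{1}{n}\left(\frac{1-e^{-(\lambda-\mu)t}}{\lambda-\mu e^{-(\lambda-\mu)t}}\right)^n$ whose derivative the paper computes by the quotient rule before evaluating the boundary terms. One bookkeeping slip in your last displayed computation: the correct product is $\lambda^{n-1}(\lambda-\mu)^2\cdot\frac{1}{\lambda-\mu}\cdot\frac{1}{(\lambda-\mu)\,n\,\lambda^n}$ (one factor $\frac{1}{\lambda-\mu}$ from the $t\to u$ Jacobian and one from $I_n$), which does equal $\frac{1}{n\lambda}$, whereas the fraction you wrote, $\frac{\lambda^{n-1}(\lambda-\mu)^2}{(\lambda-\mu)\,n\,\lambda^n}$, equals $\frac{\lambda-\mu}{n\lambda}$.
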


\begin{proof}
The derivative of $\left( \frac{1-e^{-(\lambda-\mu)t}}{\lambda-\mu e^{-(\lambda-\mu)t}} \right)^{n}$ is, using the quotient rule,
\begin{eqnarray*}
\frac{d}{dt} \left( \frac{1-e^{-(\lambda-\mu)t}}{\lambda-\mu e^{-(\lambda-\mu)t}} \right)^{n} &=& n \frac{(1-e^{-(\lambda-\mu)t)^{n-1}}}{(\lambda-\mu e^{-(\lambda-\mu)t})^{n+1}} (\lambda-\mu)^2 e^{-(\lambda-\mu)t}
\end{eqnarray*}
and therefore,
\begin{eqnarray*}
\int_0^\infty \bP_{or}[n|t] dt &=&  \frac{\lambda^{n-1}}{n}     \left[ \left( \frac{1-e^{-(\lambda-\mu)t}}{\lambda-\mu e^{-(\lambda-\mu)t}} \right)^{n} \right]_0^\infty \\
&=&  \frac{\lambda^{n-1}}{n}  \left( \frac{1}{\lambda^n} - 0 \right) =  \frac{1}{\lambda n} 
\end{eqnarray*}
which establishes the lemma.

\end{proof}

\begin{thm} \label{ThmPtorn}
We assume the uniform prior on $(0, \infty)$ for the time of origin of a tree. Conditioning the tree on having $n$ species today, the time of origin has density function
\begin{equation}
q_{or}(t|n) =  n \lambda^{n} (\lambda-\mu)^2 \frac{(1-e^{-(\lambda-\mu)t})^{n-1}  e^{-(\lambda-\mu)t}}{(\lambda-\mu e^{-(\lambda-\mu)t})^{n+1}}. \label{EqnPtorn}
\end{equation}
\end{thm}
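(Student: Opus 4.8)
The plan is to apply Bayes' rule for densities. We have an improper uniform prior $\pi(t) \equiv 1$ on $(0,\infty)$ for the time of origin, and from Equation~(\ref{EqnPnt}) we already have the likelihood $\bP_{or}[n|t]$, which the excerpt records explicitly as
$$\bP_{or}[n|t] = \lambda^{n-1} (\lambda-\mu)^2 \frac{(1-e^{-(\lambda-\mu)t})^{n-1}  e^{-(\lambda-\mu)t}}{(\lambda-\mu e^{-(\lambda-\mu)t})^{n+1}}.$$
By Bayes' rule, the posterior density of $t$ given $n$ is $q_{or}(t|n) = \bP_{or}[n|t]\,\pi(t) \big/ \int_0^\infty \bP_{or}[n|s]\,\pi(s)\,ds$, i.e. the likelihood renormalized to integrate to $1$. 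First I would invoke Lemma~\ref{LemIntPntor}, which gives the normalizing constant $\int_0^\infty \bP_{or}[n|t]\,dt = \tfrac{1}{n\lambda}$. Dividing the displayed likelihood by $\tfrac{1}{n\lambda}$ — equivalently, multiplying by $n\lambda$ — immediately yields
$$q_{or}(t|n) = n\lambda \cdot \lambda^{n-1} (\lambda-\mu)^2 \frac{(1-e^{-(\lambda-\mu)t})^{n-1}  e^{-(\lambda-\mu)t}}{(\lambda-\mu e^{-(\lambda-\mu)t})^{n+1}} = n\lambda^{n} (\lambda-\mu)^2 \frac{(1-e^{-(\lambda-\mu)t})^{n-1}  e^{-(\lambda-\mu)t}}{(\lambda-\mu e^{-(\lambda-\mu)t})^{n+1}},$$
which is exactly Equation~(\ref{EqnPtorn}).

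The only subtlety worth a sentence is the legitimacy of Bayes' rule with an improper prior: since $\int_0^\infty \bP_{or}[n|t]\,dt = \tfrac{1}{n\lambda} < \infty$, the unnormalized posterior is integrable, so the posterior is a genuine probability density even though the prior was not. I would state this explicitly, perhaps referencing the earlier remark (citing \citet{Berger1980}) on improper priors. There is essentially no computational obstacle here — the real work was done in Lemma~\ref{LemIntPntor} and in the derivation of $\bP_{or}[n|t]$ from $p_n(t)$; the present theorem is just the one-line assembly of those pieces via the definition of a conditional density. The main thing to be careful about is bookkeeping of the constants $\lambda^{n-1}$ versus $\lambda^n$ when multiplying through by $n\lambda$, and confirming that the functional form (the powers $n-1$ and $n+1$ in numerator and denominator, the single factor $e^{-(\lambda-\mu)t}$) is carried over verbatim from the likelihood, which it is.
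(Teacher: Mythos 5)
Your proposal is correct and follows exactly the paper's own argument: Bayes' rule with the improper uniform prior, normalization via Lemma~\ref{LemIntPntor}, and multiplication of $\bP_{or}[n|t]$ by $n\lambda$. The extra sentence justifying the use of an improper prior (finiteness of the normalizing integral) is a welcome clarification but does not change the route.
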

\begin{proof}
With Bayes' law, we have
\begin{eqnarray}
q_{or}(t|n) &=& \frac{\bP_{or}[n|t] q_{or}(t)}{\bP_{or}[n]} =\frac{\bP_{or}[n|t] q_{or}(t)}{\int_0^\infty \bP_{or}[n,t] dt} \notag \\
&=& \frac{\bP_{or}[n|t] q_{or}(t)}{\int_0^\infty \bP_{or}[n|t] q_{or}(t) dt } =  \frac{\bP_{or}[n|t]}{\int_0^\infty \bP_{or}[n|t]dt} \notag \\
&\stackrel{(\ref{LemIntPntor})}{=}& \lambda n \bP_{or}[n|t] \notag \\
&=& n \lambda^{n} (\lambda-\mu)^2 \frac{(1-e^{-(\lambda-\mu)t})^{n-1}  e^{-(\lambda-\mu)t}}{(\lambda-\mu e^{-(\lambda-\mu)t})^{n+1}}. \notag
\end{eqnarray}
\end{proof}

\begin{cor} \label{CorFtorn}
The distibution for the time of origin given $n$ species today is
$$ Q_{or}(t|n) = \left(  \frac{\lambda(1-e^{-(\lambda-\mu)t})}{\lambda-\mu e^{-(\lambda-\mu)t}} \right)^n.$$
\end{cor}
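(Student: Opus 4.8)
The plan is simply to integrate the density $q_{or}(t|n)$ from Theorem~\ref{ThmPtorn}, using the antiderivative already computed in the proof of Lemma~\ref{LemIntPntor}. Recall from that computation that
$$\frac{d}{dt} \left( \frac{1-e^{-(\lambda-\mu)t}}{\lambda-\mu e^{-(\lambda-\mu)t}} \right)^{n} = n \frac{(1-e^{-(\lambda-\mu)t})^{n-1}}{(\lambda-\mu e^{-(\lambda-\mu)t})^{n+1}} (\lambda-\mu)^2 e^{-(\lambda-\mu)t},$$
so that $q_{or}(t\,|\,n) = \lambda^{n} \frac{d}{dt}\left( \frac{1-e^{-(\lambda-\mu)t}}{\lambda-\mu e^{-(\lambda-\mu)t}} \right)^{n}$. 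In other words, the density we must integrate is already in the form of a derivative times the constant $\lambda^n$.

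First I would write $Q_{or}(t\,|\,n) = \int_0^t q_{or}(u\,|\,n)\,du$ and apply the fundamental theorem of calculus with the antiderivative above, obtaining
$$Q_{or}(t\,|\,n) = \lambda^{n} \left[ \left( \frac{1-e^{-(\lambda-\mu)u}}{\lambda-\mu e^{-(\lambda-\mu)u}} \right)^{n} \right]_0^t.$$
Then I would evaluate the boundary terms: at $u=0$ the bracketed expression is $\left(\frac{0}{\lambda-\mu}\right)^n = 0$ (this uses $\lambda > \mu$, or the $\lambda=\mu$ limit as elsewhere in the paper), leaving
$$Q_{or}(t\,|\,n) = \lambda^{n} \left( \frac{1-e^{-(\lambda-\mu)t}}{\lambda-\mu e^{-(\lambda-\mu)t}} \right)^{n} = \left( \frac{\lambda(1-e^{-(\lambda-\mu)t})}{\lambda-\mu e^{-(\lambda-\mu)t}} \right)^{n},$$
which is the claimed formula after absorbing $\lambda^n$ into the $n$-th power.

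There is essentially no obstacle here: the only thing to be careful about is the evaluation of the lower limit at $t=0$, and it is worth recording as a sanity check that $Q_{or}(t\,|\,n) \to \lambda^n (1/\lambda)^n = 1$ as $t \to \infty$, consistent with $q_{or}(\cdot\,|\,n)$ being a genuine density (equivalently, with Lemma~\ref{LemIntPntor} together with the normalization in Theorem~\ref{ThmPtorn}). One may alternatively note that Corollary~\ref{CorFtorn} is immediate from the chain of equalities $q_{or}(t\,|\,n) = \lambda n\,\bP_{or}[n\,|\,t]$ in the proof of Theorem~\ref{ThmPtorn} combined with the indefinite integral displayed in the proof of Lemma~\ref{LemIntPntor}.
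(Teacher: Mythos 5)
Your proof is correct and is essentially the same as the paper's: both rest on the antiderivative identity from Lemma \ref{LemIntPntor}, the paper merely running it in the opposite direction (differentiating the claimed $Q_{or}$ and checking the limit at infinity, rather than integrating $q_{or}$ from $0$ to $t$).
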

\begin{proof}
We have  $\lim_{t \rightarrow \infty} Q_{or}(t|n) = 1$.
Differentiation of $Q_{or}(t|n)$ w.r.t. $t$ yields $\frac{d}{dt} Q_{or}(t|n) =  n \lambda^{n} (\lambda-\mu)^2 \frac{(1-e^{-(\lambda-\mu)t})^{n-1}  e^{-(\lambda-\mu)t}}{(\lambda-\mu e^{-(\lambda-\mu)t})^{n+1}} = q_{or}(t|n)$
which completes the proof.
\end{proof}

\section{The time of speciation events} \label{DensSpec}
In this section, we calculate the density for the time of the $k$-th speciation event given we have $n$ species today.
Knowing that the distribution on ranked reconsructed trees is uniform (Theorem \ref{ThmTreeShape}), this characterizes the reconstructed trees completely. These results allow us to calculate the density for the time of a given vertex in a reconstructed tree \citep{Gernhard2006Rank,Gernhard2007}.
\subsection{Known age of the tree} \label{DensSpect}
Let $\cA_{n,t}^k$ be the time of the $k$-th speciation event in a reconstructed tree $\cA$ with $n$ extant species and age $t$.
The $n-1$ speciation events in $\cA$ are i.i.d. and have the density function $f(s|t)$, see Theorem \ref{ThmFor}.
The density of $\cA_{n,t}^k$ is therefore the $(n-k)$-th order statistic, which is (see e.g. \citet{DehlingStochastik}, Theorem 9.17),
\begin{eqnarray} 
f_{\cA_{n,t}^k} (s) &=& (n-k) {n-1 \choose n-k} F(s|t)^{n-k-1} 
(1-F(s|t))^{k-1} f(s|t) \label{EqnfAntk} 
\end{eqnarray}
for $s \leq t$ and $f_{\cA_{n,t}^k} (s) = 0$ else.
The distribution function of $\cA_{n,t}^k$ is
\begin{equation}
F_{\cA_{n,t}^k} (s) = \sum_{i=0}^{k-1} {n-1 \choose i} F(s|t)^{n-i-1} (1-F(s|t))^{i} \label{EqnFAntk}
\end{equation}
for $s \leq t$ and $F_{\cA_{n,t}^k} (s) = 1$ else.

\subsection{Unknown age of the tree}
If the time of origin is unknown, we assume a uniform prior for the time of origin. 
Using this assumption, we will calculate the density function for $\cA_n^k$, the time of the $k$-th speciation event in a tree with $n$ extant species.

\begin{thm} \label{ThmfAnk}
Let $\cA_n^k$ be the time of the $k$-th speciation event in a tree with $n$ extant species. We have for $0 \leq \mu < \lambda$,
$$f_{\cA_n^k}(s) =  (k+1) {n \choose k+1 }  \lambda^{n-k} (\lambda-\mu)^{k+2} e^{-(\lambda-\mu)(k+1)s}  \frac{(1- e^{-(\lambda-\mu)s})^{n-k-1}}{(\lambda-\mu  e^{-(\lambda-\mu)s})^{n+1}}. $$
\end{thm}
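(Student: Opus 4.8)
The plan is to obtain $\cA_n^k$ as a mixture of $\cA_{n,t}^k$ over the random age $t$ of the tree. The age has density $q_{or}(t|n)$ by Theorem \ref{ThmPtorn}, and conditional on the age being $t$ the $k$-th speciation time has density $f_{\cA_{n,t}^k}$ given in Equation (\ref{EqnfAntk}); since a speciation at time $s$ forces the age to be at least $s$, the law of total probability gives
$$f_{\cA_n^k}(s) = \int_s^\infty f_{\cA_{n,t}^k}(s)\, q_{or}(t\,|\,n)\, dt .$$
So the theorem reduces to evaluating this integral.

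The computation becomes transparent once one notices that all three factors $F(s|t)$, $f(s|t)$, $q_{or}(t|n)$ are built from the single function $a(t) := \frac{1-e^{-\rho t}}{\lambda-\mu e^{-\rho t}}$, where $\rho := \lambda-\mu > 0$. From Theorem \ref{ThmFor} one reads off $F(s|t) = a(s)/a(t)$ and $f(s|t) = a'(s)/a(t)$, with $a'(s) = \rho^2 e^{-\rho s}(\lambda-\mu e^{-\rho s})^{-2}$ (a one-line quotient-rule check); and Corollary \ref{CorFtorn} says $Q_{or}(t|n) = (\lambda a(t))^n$, hence $q_{or}(t|n) = n\lambda^n a(t)^{n-1} a'(t)$. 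Substituting these into $f_{\cA_{n,t}^k}(s) = (n-k)\binom{n-1}{n-k} F(s|t)^{n-k-1}(1-F(s|t))^{k-1} f(s|t)$ and then into the integrand, the powers of $a(t)$ cancel exactly (the exponents $-(n-k-1)-(k-1)-1+(n-1)$ sum to $0$), leaving
$$f_{\cA_n^k}(s) = (n-k)\binom{n-1}{n-k}\, n\lambda^n\, a(s)^{n-k-1}\, a'(s) \int_s^\infty \big(a(t)-a(s)\big)^{k-1} a'(t)\, dt .$$

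The remaining integral succumbs to the substitution $u = a(t)$: as $t$ ranges over $[s,\infty)$, $u$ ranges over $[a(s),\,1/\lambda)$, so it equals $\tfrac1k\big(1/\lambda - a(s)\big)^k$; note that the upper endpoint $a(\infty)=1/\lambda$ is exactly where the hypothesis $\mu<\lambda$ enters. What remains is routine: compute $1/\lambda - a(s) = \frac{\rho e^{-\rho s}}{\lambda(\lambda-\mu e^{-\rho s})}$, collect the resulting powers of $1-e^{-\rho s}$, $e^{-\rho s}$, $\rho$, $\lambda$ and $\lambda-\mu e^{-\rho s}$, and apply the binomial identity $\frac{n(n-k)}{k}\binom{n-1}{n-k} = (k+1)\binom{n}{k+1}$ to arrive at the stated density. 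I expect the only genuine obstacle is spotting the $a(\cdot)$-factorization that collapses the three messy factors into a form in which the $t$-integral telescopes; once that is in hand, one need only take care that the mixing integral starts at $t=s$ and that the endpoint $a(\infty)=1/\lambda$ is evaluated correctly.
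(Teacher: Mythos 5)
Your proposal is correct and follows essentially the same route as the paper: both write $f_{\cA_n^k}(s)=\int_s^\infty f_{\cA_{n,t}^k}(s)\,q_{or}(t|n)\,dt$ and evaluate that integral in closed form, the paper by recognizing the antiderivative $\bigl(\tfrac{1-e^{-(\lambda-\mu)(t-s)}}{\lambda-\mu e^{-(\lambda-\mu)t}}\bigr)^k$ directly and you by the equivalent substitution $u=a(t)$. Your factorization of $F$, $f$ and $q_{or}$ through the single function $a$ is a tidier way of organizing the same cancellation, and all the details (endpoint $a(\infty)=1/\lambda$, the binomial identity) check out.
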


\begin{proof}
For a fixed time $t$ of origin, we have the density $f_{\cA_{n,t}^k}$ for the time of the $k$-th speciation event (Section \ref{DensSpect}). With our uniform prior, the time of origin has density function $q_{or}(t|n)$. The density $f_{\cA_n^k}(s)$ is therefore
\begin{eqnarray*}
f_{\cA_n^k}(s) &=& \int_s^\infty f_{\cA_{n,t}^k}(s) q_{or}(t|n) dt \\
&=&  \int_s^\infty k {n-1 \choose k}  (\lambda-\mu)^{k+1} (e^{-(\lambda-\mu)s} - e^{-(\lambda-\mu)t})^{k-1} e^{-(\lambda-\mu)s}   \times \notag \\
& & \frac{ (\lambda-\mu e^{-(\lambda-\mu)t})^{n-k} (1-e^{-(\lambda-\mu)s})^{n-k-1} }{(\lambda-\mu e^{-(\lambda-\mu)s})^{n} (1- e^{-(\lambda-\mu)t})^{n-1}}  \times \notag \\
& & n \lambda^{n} (\lambda-\mu)^2 \frac{(1-e^{-(\lambda-\mu)t})^{n-1}  e^{-(\lambda-\mu)t}}{(\lambda-\mu e^{-(\lambda-\mu)t})^{n+1}}dt \notag \\
&=&  n k {n-1 \choose k}  (\lambda-\mu)^{k+3} \lambda^{n}  \frac{ e^{-(\lambda-\mu)ks} (1-e^{-(\lambda-\mu)s})^{n-k-1} }{(\lambda-\mu e^{-(\lambda-\mu)s})^{n} }  \times \notag \\
& & \int_s^\infty \frac{(1 - e^{-(\lambda-\mu)(t-s)})^{k-1} e^{-(\lambda-\mu)t}}{ (\lambda-\mu e^{-(\lambda-\mu)t})^{k+1} } dt \\
&=& n k {n-1 \choose k}  (\lambda-\mu)^{k+3} \lambda^{n}  \frac{ e^{-(\lambda-\mu)ks} (1-e^{-(\lambda-\mu)s})^{n-k-1} }{(\lambda-\mu e^{-(\lambda-\mu)s})^{n} }  \times \notag \\
& & \frac{e^{-(\lambda-\mu)s}}{k (\lambda - \mu) (\lambda - \mu e^{-(\lambda-\mu)s})} \left[ \left( \frac{1 - e^{-(\lambda-\mu)(t-s)}}{\lambda-\mu e^{-(\lambda-\mu)t}} \right)^{k}  \right]_s^\infty \\
&=& (k+1) {n \choose k+1 }  \lambda^{n-k} (\lambda-\mu)^{k+2} e^{-(\lambda-\mu)(k+1)s}  \frac{(1- e^{-(\lambda-\mu)s})^{n-k-1}}{(\lambda-\mu  e^{-(\lambda-\mu)s})^{n+1}}
\end{eqnarray*}
which establishs the theorem.
\end{proof}

\begin{rem} \label{RemDensSpecModels}
Under the Yule model, i.e. setting $\mu=0$ and $\lambda$ arbitrary in Theorem \ref{ThmfAnk}, we have,
$$f_{\cA_n^k}(s) =  (k+1) {n \choose k+1 } \lambda \frac{(e^{\lambda s} -1)^{n-k-1}}{e^{\lambda s n}}$$
which has been established in \citet{Gernhard2007} for $\lambda=1$ in a different way.

In the cCBP,
 the birth rate equals the death rate, $\lambda = \mu$. Taking the limit $\mu \rightarrow \lambda$ in $f_{\cA_n^k}$,we obtain from Theorem \ref{ThmfAnk} using the property $e^{-\epsilon} \sim 1-\epsilon$,

$$f_{\cA_n^k}(s) =  (k+1) {n \choose k+1 } \lambda^{n-k} \frac{s^{n-k-1}}{(1+ \lambda s)^{n+1}}$$
which has been established in \citet{Gernhard2007} for $\lambda=1$ in a direct way.
\end{rem}

\section{Expected speciation times} \label{MomSpec}
In this section, we calculate the expected time of the $k$-th speciation event in a reconstructed tree with $n$ species analytically. Our Python implementation for dating trees uses the analytic results.
Higher moments are calculated numerically.
\subsection{Known age of the tree}
\begin{thm} \label{ThmExpt}
The expectation of $\cA_{n,t}^k$ is, for $0 < \mu < \lambda$,
\begin{eqnarray*}
\bE[\cA_{n,t}^k]&=& t -  \sum_{i=0}^{k-1} \sum_{j=0}^i {n-1 \choose i} {i \choose j} (-1)^{i+j} \left( \frac{\lambda-\mu e^{-(\lambda-\mu)t}}{1 - e^{-(\lambda-\mu)t}} \right)^{n-j-1} \times \\
& & \left[ g(j)+ \sum_{l=1}^{n-j-1} \sum_{m=0}^{l-1}  {n-j-1 \choose l}  {l-1 \choose m} (-1)^{l+m} \frac{\lambda^{l-1-m}}{(\lambda-\mu) \mu^l} h(j,m) \right]
\end{eqnarray*}
where 
\begin{eqnarray*}
g(j)&=& \frac{1}{(\lambda-\mu)\lambda^{n-j-1}} \times \\
& &\left[ \ln \left( \frac{\lambda e^{(\lambda-\mu)t} - \mu}{\lambda-\mu}  \right) - \sum_{m=1}^{n-j-2} {n-j-2 \choose m} \frac{\mu^m}{m} \left( \lambda e^{(\lambda-\mu)t} - \mu)^{-m} - (\lambda-\mu)^{-m}  \right) \right]
\end{eqnarray*}
and
\begin{eqnarray*}
h(j,m) &=&  
\left\{
\begin{array}{ll}
     \ln \frac{ \lambda - \mu e^{-(\lambda-\mu)t} }{\lambda-\mu},   & \hbox{if $m+j+1-n = -1$,} \\
     \frac{ (\lambda - \mu e^{-(\lambda-\mu)t})^{m+j+2-n} -  (\lambda - \mu)^{m+j+2-n} }{m+j+2-n}  & \hbox{else.} \\
\end{array}
\right.
\end{eqnarray*}
For $\mu = 0$, we have,
\begin{eqnarray*}
\bE[\cA_{n,t}^k] &=&   \sum_{i=0}^{n-k-1} \sum_{j=0}^{k-1} \frac {k {n-1 \choose k}{n-k-1 \choose i}{k-1 \choose j}(-1)^{i+j}}{ \lambda (k+i-j)^{2}} \times \\
& & (1-e^{-\lambda t})^{1-n} (e^{-j \lambda t} -  ( (k+i-j)\lambda t + 1)e^{-(k+i)\lambda t}).
\end{eqnarray*}
For $\mu = \lambda$, we have
\begin{eqnarray*}
\bE[\cA_{n,t}^k] &=&  t - \sum_{i=0}^{k-1} \sum_{j=0}^i {n-1 \choose i} {i \choose j} \frac{(-1)^{i+j}}{\lambda^{n-j}} \left( \frac{1+\lambda t}{t}\right)^{n-j-1} \times \\
& & \left[  \lambda t  - (n-j-1) \ln(1+\lambda t) + \sum_{l=2}^{n-j-1} {n-j-1 \choose l} (-1)^l \frac{(1+\lambda t)^{-l+1}-1}{1-l} \right].
\end{eqnarray*}
\end{thm}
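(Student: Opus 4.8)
The plan is to pass to the tail integral. Since $\cA_{n,t}^k\in[0,t]$,
\[
\bE[\cA_{n,t}^k]=\int_0^t\bigl(1-F_{\cA_{n,t}^k}(s)\bigr)\,ds=t-\int_0^t F_{\cA_{n,t}^k}(s)\,ds ,
\]
and then to insert the closed form of $F_{\cA_{n,t}^k}$ from~(\ref{EqnFAntk}). (Integrating $s\,f_{\cA_{n,t}^k}(s)$ by parts gives the same thing; in the case $\mu=0$ the by-parts version reproduces the stated grouping more directly, see below.) Expanding $(1-F(s|t))^i$ by the binomial theorem inside~(\ref{EqnFAntk}) and re-indexing the inner sum via $j\mapsto i-j$ gives
\[
F_{\cA_{n,t}^k}(s)=\sum_{i=0}^{k-1}\sum_{j=0}^{i}\binom{n-1}{i}\binom{i}{j}(-1)^{i+j}\,F(s|t)^{\,n-1-j}\qquad(s\le t).
\]
By Theorem~\ref{ThmFor}, $F(s|t)=\frac{\lambda-\mu e^{-(\lambda-\mu)t}}{1-e^{-(\lambda-\mu)t}}\cdot\frac{1-e^{-(\lambda-\mu)s}}{\lambda-\mu e^{-(\lambda-\mu)s}}$; the first factor is constant in $s$ and is exactly the prefactor $\bigl(\frac{\lambda-\mu e^{-(\lambda-\mu)t}}{1-e^{-(\lambda-\mu)t}}\bigr)^{n-j-1}$ occurring in the theorem. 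Pulling it out of the $s$-integral reduces the whole problem to evaluating, for each exponent $p=n-1-j$ (which satisfies $p\ge n-k\ge 1$), the single integral
\[
J_p(t):=\int_0^t\left(\frac{1-e^{-(\lambda-\mu)s}}{\lambda-\mu e^{-(\lambda-\mu)s}}\right)^{\!p}ds ;
\]
the assertion for $0<\mu<\lambda$ is precisely that $\bigl[\,g(j)+\sum_{l,m}\cdots\,\bigr]=J_{\,n-1-j}(t)$.

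To evaluate $J_p(t)$ I would substitute $y=\lambda-\mu e^{-(\lambda-\mu)s}$, under which $\frac{1-e^{-(\lambda-\mu)s}}{\lambda-\mu e^{-(\lambda-\mu)s}}=\frac{y-(\lambda-\mu)}{\mu y}$, $ds=\frac{dy}{(\lambda-\mu)(\lambda-y)}$, and the range becomes $[\lambda-\mu,\ \lambda-\mu e^{-(\lambda-\mu)t}]$, so that
\[
J_p(t)=\frac{1}{(\lambda-\mu)\mu^{p}}\int_{\lambda-\mu}^{\lambda-\mu e^{-(\lambda-\mu)t}}\frac{\bigl(y-(\lambda-\mu)\bigr)^{p}}{y^{p}(\lambda-y)}\,dy .
\]
Writing $y-(\lambda-\mu)=\mu-(\lambda-y)$ and expanding by the binomial theorem produces the outer binomial $\binom{n-j-1}{l}$, the sign $(-1)^l$ and a factor $\mu^{p-l}$ (hence the $\mu^{-l}$ of the statement), and splits the integrand into an $l=0$ part $\propto \frac{1}{y^{p}(\lambda-y)}$ and, for $l\ge 1$, parts $\propto\frac{(\lambda-y)^{l-1}}{y^{p}}$. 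For $l\ge 1$, a further binomial expansion of $(\lambda-y)^{l-1}$ (the source of the inner binomial $\binom{l-1}{m}$ and of $\lambda^{l-1-m}$) reduces each term to $\int y^{m-p}\,dy$; the antiderivative is a \emph{logarithm} exactly when $m-p=-1$, i.e.\ $m+j+1-n=-1$ (the case distinction in the definition of $h(j,m)$), and a power of $y$ otherwise, and evaluating at the endpoints $\lambda-\mu$, $\lambda-\mu e^{-(\lambda-\mu)t}$ gives precisely the $h(j,m)$ terms. The $l=0$ part is integrated after a partial-fraction decomposition of $\frac{1}{y^{p}(\lambda-y)}$: its $1/y$ and $1/(\lambda-y)$ contributions combine into $\ln\frac{\lambda e^{(\lambda-\mu)t}-\mu}{\lambda-\mu}$, and the remaining algebraic contributions assemble (after rewriting in terms of $\lambda e^{(\lambda-\mu)t}-\mu$) into the finite sum defining $g(j)$. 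Substituting $J_{\,n-1-j}(t)$ back and simplifying the binomials completes the case $0<\mu<\lambda$.

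The two extreme cases are not obtained by putting $\mu=0$ or $\mu=\lambda$ into the general formula --- the factor $\frac1{\lambda-\mu}$, and for $\mu=0$ the negative powers of $\mu$, make it singular --- so I would derive them directly from the simpler forms of $F(s|t)$ recorded in the earlier subsections. For $\mu=0$ use $F(s|t)=\frac{1-e^{-\lambda s}}{1-e^{-\lambda t}}$, $f(s|t)=\frac{\lambda e^{-\lambda s}}{1-e^{-\lambda t}}$ and compute $\bE[\cA_{n,t}^k]=\int_0^t s\,f_{\cA_{n,t}^k}(s)\,ds$ directly from~(\ref{EqnfAntk}): write $f_{\cA_{n,t}^k}(s)=k\binom{n-1}{k}(1-e^{-\lambda t})^{1-n}\,\lambda e^{-\lambda s}(1-e^{-\lambda s})^{n-k-1}(e^{-\lambda s}-e^{-\lambda t})^{k-1}$, expand the last two factors by two binomial theorems, re-index so that the exponent in the exponential becomes $k+i-j$, and integrate term by term using $\int_0^t s\,e^{-as}\,ds=\bigl(1-(1+at)e^{-at}\bigr)/a^{2}$; this yields the $(k+i-j)^{-2}$ and the factor $\bigl((k+i-j)\lambda t+1\bigr)e^{-(k+i)\lambda t}$. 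For $\mu=\lambda$ use $F(s|t)=\frac{s}{1+\lambda s}\cdot\frac{1+\lambda t}{t}$ and again $\bE[\cA_{n,t}^k]=t-\int_0^t F_{\cA_{n,t}^k}(s)\,ds$; the substitution $v=1+\lambda s$ turns $\int_0^t F(s|t)^{p}\,ds$ into $\bigl(\tfrac{1+\lambda t}{t}\bigr)^{p}\lambda^{-p-1}\int_1^{1+\lambda t}(1-v^{-1})^{p}\,dv$, and integrating $v^{-l}$ term by term (a logarithm when $l=1$, a power when $l\ge 2$) gives the bracket $\bigl[\lambda t-(n-j-1)\ln(1+\lambda t)+\cdots\bigr]$ with $p=n-1-j$. (Alternatively both can be recovered as $\mu\to 0$ and $\mu\to\lambda$ limits of the general formula after grouping the singular terms, using $e^{-\epsilon}\sim 1-\epsilon$, but the direct route is cleaner.)

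The only real difficulty I anticipate is the bookkeeping in the case $0<\mu<\lambda$: keeping the three nested summations mutually consistent, correctly separating the ``resonant'' (logarithmic) terms from the algebraic ones, and recognising the identities (combining logarithms, and passing between the $\lambda-\mu e^{-(\lambda-\mu)t}$ and $\lambda e^{(\lambda-\mu)t}-\mu$ forms) that compress the $l=0$ contribution into the compact closed form of $g$. It is also worth checking the extreme values of the summation ranges (e.g.\ $k=1$, or $p=n-k$ as small as $1$, where some inner sums are empty) so that the stated formulas hold verbatim there.
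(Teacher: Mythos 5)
Your proposal is correct and follows essentially the same route as the paper: integration by parts to reduce $\bE[\cA_{n,t}^k]$ to $t-\int_0^t F_{\cA_{n,t}^k}(s)\,ds$, binomial expansion of $F_{\cA_{n,t}^k}$ with the re-indexing $j\mapsto i-j$ to isolate $\int_0^t F(s|t)^{\,n-j-1}\,ds$, the substitution $y=\lambda-\mu e^{-(\lambda-\mu)s}$ followed by the two nested binomial expansions that produce the $l,m$ sums and the logarithmic/power dichotomy defining $h(j,m)$, and the same $x=1+\lambda s$ substitution for the $\mu=\lambda$ case. The only deviations are minor: the paper evaluates the $l=0$ term via a second substitution $x=\lambda e^{(\lambda-\mu)s}-\mu$ rather than your partial-fraction decomposition of $\tfrac{1}{y^{p}(\lambda-y)}$ (both yield $g(j)$, though your route needs the extra rewriting you flag), and it obtains the $\mu=0$ formula by citing the $\lambda=1$ result of an earlier paper and rescaling time by $1/\lambda$, whereas your direct double-binomial computation with $\int_0^t s\,e^{-as}\,ds$ is self-contained and equally valid.
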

The proof is found in the appendix.

\subsection{Unknown age of the tree}
A closed form solution for the first and second moment (for all $k$) of $\cA_n^k$ under the Yule and for all moments under the cCBP (with the setting $\lambda=1$) is given in \citet{Gernhard2007},
\begin{eqnarray}
\bE^{Yule}[\cA_n^k] &=& \sum_{i=k+1}^n \frac{1}{i}, \label{EqnExpYule} \\
\bE^{Yule}[(\cA_n^k)^2] &=& \sum_{i=k+1}^n \frac{1}{i^2} + \sum_{i=k+1}^n  \sum_{j=k+1}^n \frac{1}{ij},\\
\bE^{cCBP}[(\cA_{n}^k)^m] &=&
\left\{
\begin{array}{ll}
    \frac{{n-k+m-1 \choose m}}{{k \choose m}} & \hbox{if $k \geq m$},  \label{EqnExpCBP}\\
    \infty & \hbox{else}. \\
\end{array}
\right.
\end{eqnarray}
For general $\lambda,\mu$, we have the following analytic expression for the expectation (the proof is found in the appendix).
\begin{thm} \label{ThmExp}
For $0 < \mu < \lambda$, the moments of $\cA_n^k$ are ($\rho := \mu / \lambda$),
\begin{eqnarray*}
\bE[\cA_n^k] &=&  \frac{k+1}{\lambda} {n \choose k+1 } (-1)^{k} \sum_{i=0}^{n-k-1} {n-k-1 \choose i} \frac{1}{(k+i+1) \rho} \left(\frac{1}{\rho} -1 \right)^{k+i} \times \\
& & \left[ \log \left( \frac{1}{1-\rho}  \right)  -  \sum_{j=1}^{k+i} {k+i \choose j} \frac{(-1)^{j}}{j}  \left(1 - \left(\frac{1}{1-\rho}\right)^{j} \right)  \right].
\end{eqnarray*}
For $\mu=0$ we have
 $$\bE[\cA_n^k] = \sum_{i=k+1}^n \frac{1}{\lambda i}$$
and for $\mu = \lambda$ we have
 $$\bE[\cA_n^k] = \frac{n-k}{\lambda k}.$$
In particular, the expectations basically only depend on $\rho$. Different $\lambda$ just scale time by $1 / \lambda$.
\end{thm}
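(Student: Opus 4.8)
The plan is to evaluate $\bE[\cA_n^k]=\int_0^\infty s\,f_{\cA_n^k}(s)\,ds$ directly from the density of Theorem \ref{ThmfAnk}. The decisive step is the change of variable
$$u\;=\;\frac{\lambda\bigl(1-e^{-(\lambda-\mu)s}\bigr)}{\lambda-\mu e^{-(\lambda-\mu)s}}\,,$$
which is precisely the coordinate in which the distribution $Q_{or}(\cdot\,|\,n)$ of Corollary \ref{CorFtorn} reads $u^n$. It is a bijection of $(0,\infty)$ onto $(0,1)$ with Jacobian $du=\frac{\lambda(\lambda-\mu)^2 e^{-(\lambda-\mu)s}}{(\lambda-\mu e^{-(\lambda-\mu)s})^2}\,ds$, and after the cancellations this entails one finds that the density becomes the $\mathrm{Beta}(n-k,k+1)$ density, $f_{\cA_n^k}(s)\,ds=(k+1)\binom{n}{k+1}u^{n-k-1}(1-u)^k\,du$, while $s=\tfrac{1}{\lambda-\mu}\log\tfrac{1-\rho u}{1-u}$. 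Hence
$$\bE[\cA_n^k]=\frac{(k+1)\binom{n}{k+1}}{\lambda-\mu}\,I_n^k,\qquad I_n^k:=\int_0^1 u^{n-k-1}(1-u)^k\log\frac{1-\rho u}{1-u}\,du,$$
and the whole theorem reduces to evaluating $I_n^k$ in closed form.

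For $0<\mu<\lambda$ I would substitute $w=1-u$ and write $\log\tfrac{1-\rho u}{1-u}=\log(w+\beta)+\log\rho-\log w$ with $\beta:=\tfrac1\rho-1$ (so $\rho(1+\beta)=1$ and $\tfrac{1+\beta}{\beta}=\tfrac1{1-\rho}$), then expand $(1-w)^{n-k-1}$ binomially. Each resulting term $\int_0^1 w^{m}\bigl[\log(w+\beta)+\log\rho-\log w\bigr]\,dw$, with $m=k+i$, is elementary: $\int_0^1 w^m\log w\,dw=-\tfrac1{(m+1)^2}$, and integration by parts in $\int_0^1 w^m\log(w+\beta)\,dw$ — whose boundary terms vanish, since $w^{m+1}$ kills the endpoint $0$ and $\log(w+\beta)+\log\rho=0$ at $w=1$ — leaves $-\tfrac1{m+1}\int_0^1\tfrac{w^{m+1}}{w+\beta}\,dw$, which by polynomial division equals $-\tfrac1{m+1}\bigl[\sum_{p=0}^m\tfrac{(-\beta)^{m-p}}{p+1}+(-\beta)^{m+1}\log\tfrac1{1-\rho}\bigr]$. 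Summing,
$$I_n^k=\sum_{i=0}^{n-k-1}\binom{n-k-1}{i}(-1)^i\Bigl[\tfrac1{(m+1)^2}-\tfrac1{m+1}\sum_{p=0}^m\tfrac{(-\beta)^{m-p}}{p+1}-\tfrac{(-\beta)^{m+1}}{m+1}\log\tfrac1{1-\rho}\Bigr].$$

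The last step is to identify this with the stated expression. Using $\tfrac1{\lambda-\mu}=\tfrac1{\lambda(1-\rho)}$, and the elementary identities $\sum_{j=1}^m\binom mj\tfrac{(-1)^j}{j}=-H_m$ (with $H_m=\sum_{i=1}^m\tfrac1i$) and $\sum_{j=1}^m\binom mj\tfrac{(-1)^j}{j}\bigl(1-y^j\bigr)=\int_1^y\tfrac{1-(1-t)^m}{t}\,dt$, one checks — term by term in $i$ — that $\tfrac{(k+1)\binom{n}{k+1}}{\lambda-\mu}$ times the $i$-th summand above equals the $i$-th summand of the claimed formula. In practice it is cleanest to separate the coefficient of $\log\tfrac1{1-\rho}$ (which matches at once) from the remaining rational function of $\rho$; the latter amounts to the polynomial identity $\tfrac1{m+1}-\sum_{p=0}^m\tfrac{(-\beta)^{m-p}}{p+1}=(-\beta)^{m+1}\sum_{j=1}^m\binom mj\tfrac{(-1)^j}{j}\bigl(1-(\tfrac1{1-\rho})^j\bigr)$ ($m=k+i$), which follows from the two identities above. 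I expect this reindexing and resummation — keeping the harmonic and geometric pieces in step and folding them into the single bracketed sum $\bigl[\log\tfrac1{1-\rho}-\sum_{j}\binom{k+i}{j}\tfrac{(-1)^j}{j}(1-(\tfrac1{1-\rho})^j)\bigr]$ — to be the only genuinely fiddly point; it is bookkeeping, with no analytic content.

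Finally, the two degenerate cases need essentially no extra work. For $\mu=0$ one sets $\rho=0$ in $I_n^k$ before the $\beta$-manipulation: it becomes $-\int_0^1 u^{n-k-1}(1-u)^k\log(1-u)\,du$, which by the standard formula $\int_0^1 u^{a-1}(1-u)^{b-1}\log(1-u)\,du=\mathrm{B}(a,b)\bigl(\psi(b)-\psi(a+b)\bigr)$ gives $\bE[\cA_n^k]=\tfrac1\lambda\bigl(\psi(n+1)-\psi(k+1)\bigr)=\sum_{i=k+1}^n\tfrac1{\lambda i}$ (equivalently, rescale time by $1/\lambda$ in $(\ref{EqnExpYule})$). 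For $\mu=\lambda$, let $\mu\to\lambda$ with $e^{-\epsilon}\sim1-\epsilon$ in the cCBP density of Remark \ref{RemDensSpecModels}: then $\bE[\cA_n^k]=(k+1)\binom{n}{k+1}\lambda^{n-k}\int_0^\infty\tfrac{s^{n-k}}{(1+\lambda s)^{n+1}}\,ds=\tfrac{(k+1)\binom{n}{k+1}}{\lambda}\cdot\tfrac{(n-k)!\,(k-1)!}{n!}=\tfrac{n-k}{\lambda k}$, where one notes the integral — hence the expectation — is finite only for $k\ge1$, in agreement with $(\ref{EqnExpCBP})$. The closing remark that the expectation depends on $\lambda$ only through the overall factor $1/\lambda$ is then read off the formula.
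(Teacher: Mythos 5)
Your proposal is correct, and it reaches the result by a genuinely different and arguably more structural route than the paper. The paper's proof first integrates by parts ($\int_0^\infty s f(s)\,ds = [sF(s)]_0^\infty - \int_0^\infty F(s)\,ds$) and then performs two separate ad hoc substitutions, $x = e^{-(\lambda-\mu)s}/(\lambda-\mu e^{-(\lambda-\mu)s})$ to find the antiderivative $F$ and $x=\lambda-\mu e^{-(\lambda-\mu)s}$ to integrate it, each followed by a binomial expansion. You instead perform the single probability-integral-transform substitution $u = \lambda(1-e^{-(\lambda-\mu)s})/(\lambda-\mu e^{-(\lambda-\mu)s})$, under which $f_{\cA_n^k}(s)\,ds$ becomes exactly the $\mathrm{Beta}(n-k,k+1)$ density; I verified the Jacobian computation and the identity $s=\tfrac{1}{\lambda-\mu}\log\tfrac{1-\rho u}{1-u}$, and this observation dovetails nicely with the remark in Section \ref{SecProperties} that $t_{or}$ together with the $n-1$ speciation times form the order statistics of $n$ i.i.d.\ points whose common distribution function is precisely $u(\cdot)$ — so $u(\cA_n^k)$ is the $(k+1)$-st largest of $n$ i.i.d.\ uniforms. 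This reduces the whole theorem to one log-integral against a Beta density, which you evaluate by the same kind of polynomial division and binomial resummation the paper uses. The one step you defer as ``bookkeeping,'' namely $\tfrac1{m+1}-\sum_{p=0}^m\tfrac{(-\beta)^{m-p}}{p+1}=(-\beta)^{m+1}\sum_{j=1}^m\binom mj\tfrac{(-1)^j}{j}\bigl(1-(1-\rho)^{-j}\bigr)$ with $m=k+i$, is indeed correct (both sides reduce to $-\sum_{p=0}^{m-1}(-\beta)^{m-p}/(p+1)$ via the integral representation you cite, and I checked $m=1,2$ directly), so nothing is missing. A further difference is that you derive the $\mu=0$ and $\mu=\lambda$ cases directly from the Beta integral and from the cCBP density of Remark \ref{RemDensSpecModels} (correctly noting finiteness requires $k\ge 1$ when $\mu=\lambda$), whereas the paper simply cites Equations (\ref{EqnExpYule}) and (\ref{EqnExpCBP}) from earlier work; your treatment is therefore more self-contained, at the cost of slightly more computation.
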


\begin{figure}[!h]
\begin{center}
\includegraphics[scale=0.8]{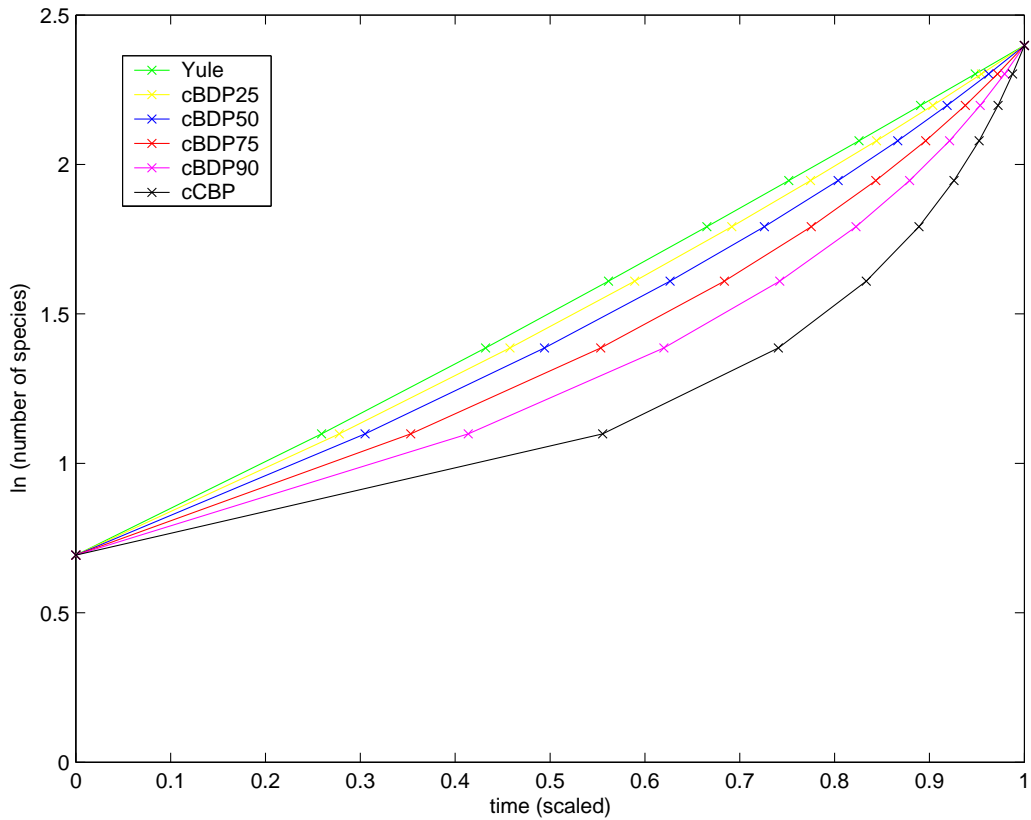}
\caption{Lineage through time plot for $n=10$ species. Time is scaled such that the $mrca$ is at $0$ and today is $1$. We have $\rho = 0, 0.25, 0.5,0.75,0.9,1$ from top to bottom. Note that for variing $\lambda$, time is scaled by $1/\lambda$ (compared to $\lambda=1$). That means, since we scale time, the plots are the same for any $\lambda$.}
\label{FigLTTexpectedh}
\end{center}
\end{figure}
Knowing the expected time of the $k$-th speciation event allows us to draw a lineage-through-time (LTT) plot \citep{Nee1994} analytically. In a LTT plot, the time vs. the number of species at that time (on a logarithmic scale) is drawn. These plots are frequently used for comparing the data with a model. Commonly, the LTT plots for different models are obtained via simulations.
Since we know the expected time of the speciation events in our model analytically, we can plot the LTT plot analytically, see Figur \ref{FigLTTexpectedh}.


\section{Properties of the speciation times} \label{SecProperties}
\subsection{More on the point process of cBDP}
In Section \ref{SecPointProc}, we showed that a reconstructed tree of a cBDP of age $t$ can be interpreted as a point process on $n-1$ points which are i.i.d. We will see in this section, that the same is not true if we do not condition on the age of the tree but assume a uniform prior. 

From Theorem \ref{ThmFor} we obtain the density function for $x=(x_1, \ldots, x_{n-1})$, the order statistic of the speciation times, conditioned on the time of origin, $t$,
$$f(x|t,n) = (n-1)! \prod_{i=1}^{n-1} \frac{ (\lambda-\mu)^2 e^{-(\lambda-\mu)x_i}}{(\lambda-\mu e^{-(\lambda-\mu)x_i})^2} \frac{\lambda-\mu e^{-(\lambda-\mu)t}}{1 - e^{-(\lambda-\mu)t}}.$$  
With the uniform prior on the time of origin, we obtain the density for $x$ given we have $n$ extant species, $f(x|n)$,
\begin{eqnarray*}
f(x|n) &=& \int_{x_1}^\infty f(x|t,n) q_{or}(t|n) dt \\
&=&    n! \lambda^{n} (\lambda-\mu)^2 
\left( \prod_{i=1}^{n-1} \frac{ (\lambda-\mu)^2 e^{-(\lambda-\mu)x_i}}{(\lambda-\mu e^{-(\lambda-\mu)x_i})^2} \right)
\int_{x_1}^\infty \frac{  e^{-(\lambda-\mu)t}}{(\lambda-\mu e^{-(\lambda-\mu)t})^{2}}dt \\
&\stackrel{\mu \neq 0 }{=}& n! \lambda^{n} (\lambda-\mu)^2 
\left( \prod_{i=1}^{n-1} \frac{ (\lambda-\mu)^2 e^{-(\lambda-\mu)x_i}}{(\lambda-\mu e^{-(\lambda-\mu)x_i})^2} \right)
\left[\frac{1}{- \mu (\lambda-\mu)  (\lambda-\mu e^{-(\lambda-\mu)t})  }   \right]_{x_1}^\infty \\
&=& n! \lambda^{n-1} (\lambda-\mu)  \frac{ e^{-(\lambda-\mu)x_1}}{ \lambda-\mu e^{-(\lambda-\mu)x_1}} 
\prod_{i=1}^{n-1} \frac{   (\lambda-\mu)^2  e^{-(\lambda-\mu)x_i}}{(\lambda-\mu e^{-(\lambda-\mu)x_i})^2} .
\end{eqnarray*}
If the $n-1$ speciation points would be i.i.d. with density function $g$, we would have $f(x|n)=(n-1)! \prod_{i=1}^{n-1} g(x_i|n)$.
Such a function $g$ does not exist due to the $x_1$, 
i.e. the $s_i$ are not i.i.d. However, since each permutation of the $s_i$ is equally likely (Corollary \ref{CorPermutation}), the $s_i$ are distributed identical.

If we condition on the time of the $mrca$, $x_1$, we again have independent points, as stated in Theorem \ref{ThmFor} (without using Theorem \ref{ThmFor}, we could also 
show the independence by calculating  the density of $x$ conditioning on $n, x_1$ as $f(x|n,x_1) = \frac{f(x|n)}{f(x_1|n)} =\frac{f(x|n)}{f_{\cA_n^1}(x_1)}$).
For $\mu = 0$, i.e. for the Yule model, we can establish the same result,
\begin{eqnarray*}
f(x|n) &=& \int_{x_1}^\infty f_{or}(x|t,n) q_{or}(t|n) dt =  n! \lambda^{n} 
\prod_{i=1}^{n-1}   e^{-\lambda x_i}
\int_{x_1}^\infty   e^{- \lambda t}dt \\
&=& n! \lambda^{n-1}  e^{- \lambda x_1} 
\prod_{i=1}^{n-1}   e^{-\lambda x_i},
 \end{eqnarray*}
i.e. the $s_i$ are not independent.
Conditioning on $x_1$, we again have independent points, as stated in Theorem \ref{ThmFor}.

\begin{rem}
Let us now consider the joint probability of the $n-1$ speciation events and the time of origin, $f(x,t|n)$. With $t:=x_0$, we have,
\begin{eqnarray*}
f(x_0,x_1,\ldots x_{n-1}|n) &=& f(x_1,\ldots x_{n-1}|t,n) q_{or}(t|n) =   n! \prod_{i=0}^{n-1} \lambda \frac{ (\lambda-\mu)^2 e^{-(\lambda-\mu)x_i}}{(\lambda-\mu e^{-(\lambda-\mu)x_i})^2},
\end{eqnarray*}
i.e. $x_0,x_1,\ldots x_{n-1}$ is the order statistic of $n$ i.i.d. random variables.
\end{rem}

\subsection{The point process of the coalescent} 
The coalescent is the standard neutral model for population genetics. The $n$ individuals in a population are assumed to coalesce as follows. For the most recent coalescent event, pick two of the $n$ individuals uniformly at random, the time between today and their coalescent is distributed exponential (rate ${n \choose 2} \lambda$) where $\lambda$ is the rate of coalescent.
We will show that this process -- even though it is very similar to the Yule process -- does not have a point process representation with i.i.d. coalescent points.

Let $x=(x_1,x_2,\ldots,x_{n-1})$ be the order statistic of the coalescent times (with $x_1>,x_2 \ldots > x_{n-1}$). Note that $x_i-x_{i+1}$ is distributed exponential with rate ${i+1 \choose 2} \lambda$.
The density function for $x$ is therefore,
\begin{eqnarray*}
f(x|n) &=& 
 \left( \lambda {n \choose 2} e^{-\lambda {n \choose 2} x_{n-1}} \right)   \prod_{i=1}^{n-2} \lambda {i+1 \choose 2} e^{- \lambda {i+1 \choose 2} (x_i - x_{i+1})} \\
&=& \frac{n! (n-1)! }{2^{n-1}} \prod_{i=1}^{n-1} \lambda e^{- \lambda i x_i}.
\end{eqnarray*}
Conditioning on the time of the most recent common ancestor, $x_1$, we get,
\begin{eqnarray*}
f(x|n,x_1) &=& \frac{f(x|n)}{f(x_1|n)} = \frac{n! (n-1)!}{f(x_1|n) 2^{n-1}} \prod_{i=1}^{n-1} \lambda e^{- \lambda i x_i} 
= h(x_1,n) \prod_{i=2}^{n-1} \lambda e^{- \lambda i x_i}.
\end{eqnarray*}
where $h$ is a function only depending on $x_1,n$.
If the $n-2$ coalescent points would be i.i.d. with density function $g$,
we would have $f(x|n,x_1) = (n-2)! \prod_{i=2}^{n} g(x_i,x_1,n)$.
However, due to the $i$ in $e^{- \lambda i x_i}$, this property is not satisfied, therefore the $n-2$ points are not i.i.d. However, in the coalescent, also each ranked oriented tree shape is equally likely (see \citet{Aldous2001} or argue as in Theorem \ref{ThmTreeShape}), therefore each permutation of the $s_i$ has the same probability. That means that the $s_i$ are identical distributed-- but not independent.

\subsection{Backwards process of a cBDP}

In the birth-death process, extant species speciate and die with exponential waiting times.
However, we condition the process to obtain a reconstructed tree with $n$ extant species today. We will describe the backward process, i.e. determine the waiting time until the extant species coalesce in the reconstructed tree.

\begin{thm}
Under the conditioned birth death process, a pair of species coalesces according to density function $f(s|t)$ from Theorem \ref{ThmFor}. 
\end{thm}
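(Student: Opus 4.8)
The plan is to observe that the backward process is nothing but the point process of Section~\ref{SecPointProc} read from the present into the past, and then to quote Theorem~\ref{ThmFor}. In a reconstructed tree of age $t$ the event ``two lineages of the backward process merge'' is, by definition, an interior vertex of the tree; under the bijection of Lemma~\ref{LemBij} every interior vertex is exactly one of the point-process points, sitting at position $(i+1/2, s_i)$, and the time at which the coalescence happens is its height $s_i$. Theorem~\ref{ThmFor} states that each $s_i$ has density $f(s\mid t)$, which is the assertion.

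In more detail, I would first reduce to the oriented tree: by Remark~\ref{RemRecTrees} the law of the ranked, edge-lengthed reconstructed tree produced by the cBDP is obtained from the law of the associated oriented tree by a uniform relabeling of the $n$ leaves together with a uniform choice of the $l/r$ orientations at the $n-1$ interior vertices, and neither operation moves an interior vertex in time; hence the coalescence times have the same law in the two trees. I would then invoke Lemma~\ref{LemBij}: conditioned on the age $t$, the oriented tree is in bijection with the point process, its interior vertices are precisely the points $(i+1/2, s_i)$, and the backward process merges the two corresponding sub-lineages at height $s_i$. Finally, Theorem~\ref{ThmFor} (together with the daughter-tree argument preceding it, which lets ``age $t$'' mean either $t_{or}=t$ or $t_1=t$) gives that, conditioned on the age, each $s_i$ has density $f(s\mid t)$; so a pair of species coalesces with density $f(s\mid t)$.

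I do not expect a genuine obstacle here, since the content is entirely the translation carried out in the first two steps and both ingredients are already available. The one point I would be careful to phrase correctly is the meaning of ``a pair of species coalesces'': the claim is about the law of a generic coalescence event --- equivalently, about $s_i$ for a position $i$ selected without reference to the heights, which is legitimate by Corollary~\ref{CorPermutation} --- and not about the time at which two \emph{prescribed} extant species meet. For a prescribed pair of leaves $a<b$ that meeting time equals $\max(s_a,\dots,s_{b-1})$, an order-statistic mixture of the $s_i$ which in general is not distributed as $f(s\mid t)$. What the theorem records is that every one of the $n-1$ coalescence events has density $f(s\mid t)$ and that, conditioned on the age, these events are even i.i.d.\ --- in contrast with the standard coalescent treated in the previous subsection.
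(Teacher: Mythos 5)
Your argument is correct, and it uses the same ingredients the paper does (the point process of Lemma \ref{LemBij}, the reduction to oriented trees via Remark \ref{RemRecTrees}, and Theorem \ref{ThmFor}). The interesting divergence is in the one step you flag as delicate. The paper's own proof reads the statement as being about a \emph{fixed} pair of species and disposes of the positional issue with ``obviously, we can put them next to each other at location $(i,i+1)$''; you instead read the statement as being about a generic coalescence event (equivalently, a point-process-adjacent pair, i.e.\ a position $i$ chosen without reference to the heights, which is legitimate by Corollary \ref{CorPermutation}), and you explicitly warn that the fixed-pair version would require the meeting time $\max(s_a,\ldots,s_{b-1})$ of leaves landing at positions $a<b$, which is not $f(s\mid t)$-distributed.

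Your caveat is not just a stylistic nicety: it exposes that the paper's ``put them next to each other'' step is not an innocent WLOG. For a prescribed pair of labelled species the positions are random, and conditioning on adjacency changes the law of the coalescence time. Concretely, for $n=3$ a fixed pair $\{A,B\}$ is adjacent (forms the cherry) with probability $1/3$ and otherwise meets at the root, so its coalescence time has density $\frac{1}{3}\cdot 2f(1-F)+\frac{2}{3}\cdot 2fF=\frac{2}{3}f(s\mid t)\left(1+F(s\mid t)\right)$, which differs from $f(s\mid t)$. So the theorem is true only under your reading (one of the $n-1$ coalescence events, or equivalently an adjacent pair in the point process), and your proof is the one that actually establishes it; the paper's proof, taken literally as a claim about an arbitrary prescribed pair of extant species, does not go through.
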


\begin{proof}
Consider a fixed pair of species out of the $n$ species. Obviously, we can put them next to each other on the $x$-axis of the point process, at location $(i,i+1)$. Their coalescent point is $(i+1/2, s_i)$, see Theorem \ref{ThmIndeps}. The time $s_i$ has the distribution with density function $f(s|t)$ from Theorem \ref{ThmFor}.
\end{proof}

In a reconstructed tree with $n$ species, the time to the last speciation event, i.e. the time between the $(n-1)$-st speciation event and today is $\cA_{n,t}^{n-1}$, $\cA_n^{n-1}$.
The time between the $k$-th and the $(k+1)$-st speciation event can be calculated as follows. 
First note that since the $n-1$ points in the point process are i.i.d. with density function $f(s|t)$,
the density function $g$ of point $j_1$ being at time $s_{j_1}$ and $j_2$ being at time $s_{j_2}$ is,
$$g(s_{j_1},s_{j_2}|t) = f(s_{j_1}|t)f(s_{j_2}|t).$$
Assume the $k$-th speciation event is at time $\tau$ and the $(k+1)$-st speciation event is at time $\tau-s$. We have $n-1$ possibilities choosing the point for the $k$-th speciation event from the $n-1$ points, and $n-2$ possibilities to choose the point for the $(k+1)$-st speciation event.
The density function for having a speciation event at time $\tau$ and $\tau-s$ is therefore $(n-1)(n-2)f(\tau|t)f(\tau-s|t)$.
The probability for $k-1$ speciation points of the remaining $n-3$ speciation points being earlier than $\tau$ is ${n-3 \choose k-1} (1-F(\tau))^{k-1}$. The probability that the remaining $n-k-2$ speciation points happend after $\tau-s$ is $F(\tau-s)^{n-k-2}$. Overall,
$$f_{\cA_{n,t}^k - \cA_{n,t}^{k+1}} (s) = \int_s^t (n-1)(n-2) {n-3 \choose k-1} (1-F(\tau|t))^{k-1} F(\tau-s|t)^{n-k-2} f(\tau|t)f(\tau-s|t) d \tau.$$

%

The time between the $k$-th and $l$-th speciation event ($k<l$) in a tree of age $t$ can be obtained in the same way. In addition to above, we require $l-k-1$ points to be between $\tau$ and $\tau-s$,
\begin{eqnarray*}
f_{\cA_{n,t}^k - \cA_{n,t}^{l}} (s) &=& \int_s^t (n-1)(n-2) {n-3 \choose k-1}{n-k-2 \choose l-k-1} (1-F(\tau|t))^{k-1} \times \\
& & ( F(\tau|t)-F(\tau-s|t))^{l-k-1} F(\tau-s|t)^{n-l-1} f(\tau|t)f(\tau-s|t) d \tau.
\end{eqnarray*}


Note that $\cA_{n,t}^{k-1} - \cA_{n,t}^{k}$ is the time until a coalescent event for $k$ species in the reconstructed tree.
We found analytic solutions for the above integrals under the Yule model (see next Section). For $\mu \neq 0$, the densities can be derived with numerical integration in the PhyloTree package.
If assuming a uniform prior for the time of origin, we additionally need to integrate the above densities over $t$, weighted by $q_{or}(t|n)$.

\subsection{Backwards process of the Yule model}
\subsubsection{Known tree age}
For the Yule model, we can calculate the time between any two speciation events in the reconstructed tree analytically.
The time between the $k$-th speciation event and the $l$-th speciation event, $l > k$, given the time between the $n$-th speciation event (today) and the origin of the tree is $t$
 has been calculated in \citet{Gernhard2007Yule} for $\lambda=1$ which yields for general $\lambda$ to 
$$f_{\cA_{n,t}^{k}-\cA_{n,t}^{l}} (s)  = \lambda  \sum_{i=0}^{k-1} \sum_{j=0}^{n-l-1} B_{i,j} e^{\lambda (n-l)s}  \frac{(e^{\lambda s}-1)^{l-k-1}}{ (e^{\lambda t}-1)^{n-1}}
  (e^{\lambda (n-k+i)( t-s)}-  e^{\lambda j( t-s)} )  
$$
with $B_{i,j}=k(k+1) {l \choose k+1}  {n-1 \choose l} {k-1 \choose i} {n-l-1 \choose j} \frac{(-1)^{n+k-l-i-j}}{n-k+i-j}$.
Note that $\cA_{n,t}^{k-1}-\cA_{n,t}^{k}$ is the time until a coalescent event of $k$ extant species.

Analogous results are not straightforward to obtain in a process with extinction. However, the expectation can be calculated straightforward for the cBDP, $ \bE[\cA_{n,t}^k -\cA_{n,t}^l ] =  \bE[\cA_{n,t}^k] -\bE[\cA_{n,t}^l ]$.

\subsubsection{Unknown tree age} \label{SecSuccSpec}

We assume an uniform prior for the time of origin of a tree-- a first ancestor species was created at any point in the past with equal probability. Since we want to obtain $n$ species today, the time of origin has to be conditioned to see $n$ species today.

For the pure birth process, this is equivalent to start growing a tree and wait until the tree has $n$ species. After the $(k-1)$-th speciation event, we always have an exponential (rate $ k$) waiting time. Therefore, also the coalescent has an exponential (rate $k$) waiting time. 
This is not true in a process with extinction.

It remains to consider the time between the $(n-1)$-st speciation event and today. Note that today is not defined as the $n$-th speciation event, but whenever we look at the process.
The density for the time between the $(n-1)$-st speciation event and today is
$$f_{\cA_n^{n-1}}(s) = n \lambda   e^{- \lambda ns}$$
which is the exponential (rate $n$) distribution. Therefore, looking at the tree today is equivalent to looking at the tree at the time of the $n$-th speciation event. This observation has been discussed in \cite{HartmannEtAl2007}.

In \citet{Gernhard2006Rank}, the time between the $k$-th and the $l$-th speciation event, $l > k$, is established for $\lambda=1$ which yields for general $\lambda$ to
$$f_{\cA_{n}^{k} - \cA_{n}^{l} }(s) = \lambda (k+1) {l \choose k+1} e^{-l \lambda s} (e^{\lambda s}-1)^{l-k-1}. $$

For the general birth-death process, the waiting times cannot be calculated straightforward, since the time to the $n$-th speciation event differs from the time until today (note that we might have the $n$-th speciation event before today -- i.e. the $n$-th speciation event is followed by extinction).
However, obtaining the expectation for $\cA_{n}^{k,l}$ is straightforward,
$ \bE[\cA_{n}^{l} - \cA_{n}^{k}] = \bE[\cA_{n}^{l}]-\bE[\cA_{n}^{k}].$



\section{Applications}
Knowing the density and expectation of the $k$-th speciation time given we have $n$ species today, we can obtain the density and expectation for the time of each interior node of a given tree. This can be used for dating phylogenies, if only the shape is inferred-- missing dates in phylogenies could be due to supertree methods, morphological data or absence of a molecular clock. In earlier work \citep{Gernhard2006Rank,Gernhard2007}, we gave the method and computer programs for dating phylogenetic trees. So far though, we only knew the speciation times for the Yule model and the cCBP. With the results in this paper, we can date a phylogeny assuming any constant rate birth-death model. The methods are implemented in our PhyloTree package for python.

The point process representation is useful for simulating reconstructed trees on $n$ taxa. If we have no extinction, we can simulate until obtaining $n$ species. More precise, we stop at the $n+1$-st speciation event, since that is the same as stopping today (Section \ref{SecSuccSpec}). However, with extinction, simulations are tricky -- we could return to $n$ species again and again. With the point process, it is easy to sample trees on $n$ species. First sample a time of origin according to the density $q_{or}(t|n)$ in Equation (\ref{EqnPtorn}). Then sample $n-1$ speciation times according to the density $f(s|t)$ in Theorem \ref{ThmFor}. 

If we want to simulate reconstructed trees on $n$ taxa and age $t$, direct simulation of the process seems almost impossible -- we simulate until $t$ and only keep the realization if we see $n$ species. We will throw away a lot of realizations (always if we do not see $n$ species), therefore the time amount until we have a reasonable size of samples is huge. With the point process, we simply sample $n-1$ speciation times according to the density $f(s|t)$ in Theorem \ref{ThmFor}. These sampling methods and more general sampling methods will be discussed in detail in \citet{HartmannEtAl2007}.

\section{Results and Outlook}
The $n-1$ speciation points in the point process representation are i.i.d. if conditioning on the time of origin or the most recent common ancestor. As discussed, this allows us to calculate the speciation times in a reconstructed phylogeny. So far, we calculate the speciation time with only conditioning on the shape of the phylogeny. With the point process, one might be able to condition on the shape as well as on some known dates in the phylogeny. This would be valuable for dating supertrees, since some speciation times are usually known.

In the application section, we showed that simulations of reconstructed trees become easy using the point process. This becomes useful for comparing the model with the data on aspects where no analytical results are known.

\section{Acknowledgements}
The author thanks Mike Steel, Arne Mooers, Daniel Ford, Dirk Metzler and Anusch Taraz for very helpful comments and discussions.
Financial support by the Deutsche Forschungsgemeinschaft through the graduate program ``Angewandte Algorithmische Mathematik" at the Munich University of Technology and by the Allan Wilson Center through a summer studentship is gratefully acknowledged.

\bibliographystyle{apalike}
\bibliography{bibliography1}

\appendix

\section{Proofs}

\bigskip
\noindent
{\it Proof of Theorem \ref{ThmExpt}.}
Under the Yule model, i.e. $\mu=0$, the expectation of $\cA_{n,t}^k$ has been calculated in \cite{Gernhard2007Yule} for $\lambda = 1$,
\begin{eqnarray*}
\bE[\cA_{n,t}^k(\lambda=1)] &=& \sum_{i=0}^{n-k-1} \sum_{j=0}^{k-1} \frac {k {n-1 \choose k}{n-k-1 \choose i}{k-1 \choose j}(-1)^{i+j}}{ (k+i-j)^{2}} \times \\
& & (1-e^{- t})^{1-n} (e^{-j  t} -  ( (k+i-j) t + 1)e^{-(k+i) t}).
\end{eqnarray*}
For general $\lambda$,
since
$$ f_{\cA_{n,t}^k}(s) = k {n-1 \choose k} \lambda (e^{-\lambda s} - e^{-\lambda t})^{k-1} e^{-\lambda s} \frac{ (1-e^{-\lambda s})^{n-k-1} }{ (1- e^{-\lambda t})^{n-1}}, $$
we have
$$\bE[\cA_{n,t}^k (\lambda)] = \int_0^t k {n-1 \choose k} \lambda s (e^{-\lambda s} - e^{-\lambda t})^{k-1} e^{-\lambda s} \frac{ (1-e^{-\lambda s})^{n-k-1} }{ (1- e^{-\lambda t})^{n-1}} ds.$$
Substituting $x = \lambda s$ yields
\begin{eqnarray*}
\bE[\cA_{n,t}^k (\lambda)] &=& \int_0^{\lambda t}\frac{ k}{\lambda} {n-1 \choose k} x (e^{-x} - e^{-\lambda t})^{k-1} e^{-x} \frac{ (1-e^{-x})^{n-k-1} }{ (1- e^{-\lambda t})^{n-1}} dx \\
&=& \frac{\bE[\cA_{n,\lambda t}^k (\lambda = 1)]}{\lambda}.
\end{eqnarray*}
For $0<\mu<\lambda$, we have,
\begin{eqnarray}
\bE[\cA_{n,t}^k] &=& \int_0^t s f_{\cA_{n,t}^k} (s) ds = [s F_{\cA_{n,t}^k}(s) ]_0^t  - \int_0^t  F_{\cA_{n,t}^k}(s) ds \notag \\
&=& t - \sum_{i=0}^{k-1} \sum_{j=0}^i {n-1 \choose i} {i \choose j} (-1)^{i+j} \int_0^t F(s|t)^{n-j-1} ds  \label{EqnExp} \\
&=& t - \sum_{i=0}^{k-1} \sum_{j=0}^i {n-1 \choose i} {i \choose j} (-1)^{i+j}  \left( \frac{\lambda-\mu e^{-(\lambda-\mu)t}}{1 - e^{-(\lambda-\mu)t}} \right)^{n-j-1}  \notag  \\
& & \int_0^t \left( \frac{1-e^{-(\lambda-\mu)s}}{\lambda-\mu e^{-(\lambda-\mu)s}} \right)^{n-j-1} ds  \notag \\
&=& t - \sum_{i=0}^{k-1} \sum_{j=0}^i \sum_{l=0}^{n-j-1} {n-1 \choose i} {i \choose j} {n-j-1 \choose l} (-1)^{i+j+l}  \notag \\
& &  \left( \frac{\lambda-\mu e^{-(\lambda-\mu)t}}{1 - e^{-(\lambda-\mu)t}} \right)^{n-j-1} 
 \int_0^t  \frac{e^{-(\lambda-\mu) l s} }{\left( \lambda-\mu e^{-(\lambda-\mu)s}\right)^{n-j-1}}  ds.  \notag 
\end{eqnarray}
With the substitution $x = \lambda - \mu e^{-(\lambda-\mu)s}$, we obtain for $l >0$,
\begin{eqnarray*}
\int_0^t  \frac{e^{-(\lambda-\mu) l s}}{\left( \lambda-\mu e^{-(\lambda-\mu)s} \right)^{n-j-1} } ds &=& \frac{1}{\mu(\lambda-\mu)} \int_{\lambda-\mu}^{ \lambda - \mu e^{-(\lambda-\mu)t}}  \frac{ \left( \frac{\lambda-x}{\mu}  \right)^{l-1}  }{x^{n-j-1}} dx \\
&=& \frac{1}{(\lambda-\mu) \mu^l} \sum_{m=0}^{l-1} {l-1 \choose m} (-1)^m \lambda^{l-1-m} \int_{\lambda-\mu}^{ \lambda - \mu e^{-(\lambda-\mu)t}}  x^{m+j+1-n} dx \\
&=& \frac{1}{(\lambda-\mu) \mu^l} \sum_{m=0}^{l-1} {l-1 \choose m} (-1)^m \lambda^{l-1-m} h(j,m) 
\end{eqnarray*}
where
\begin{eqnarray*}
h(j,m) &=&  
\left\{
\begin{array}{ll}
     \ln \frac{ \lambda - \mu e^{-(\lambda-\mu)t} }{\lambda-\mu},   & \hbox{if $m+j+1-n = -1$,} \\
     \frac{ (\lambda - \mu e^{-(\lambda-\mu)t})^{m+j+2-n} -  (\lambda - \mu)^{m+j+2-n} }{m+j+2-n}  & \hbox{else.} \\
\end{array}
\right.
\end{eqnarray*}
For $l=0$, we have with the substitution $x=\lambda e^{(\lambda-\mu)s} - \mu$,
\begin{eqnarray*}
g(j)&:=& \int_0^t  \frac{1}{\left( \lambda-\mu e^{-(\lambda-\mu)s} \right)^{n-j-1} } ds \\
&=& \int_0^t \frac{e^{(\lambda-\mu)(n-j-1)s}}{(\lambda e^{(\lambda-\mu)s} -\mu)^{n-j-1}}  \\
&=& \frac{1}{(\lambda-\mu)\lambda} \int_{\lambda-\mu}^{\lambda e^{(\lambda-\mu)t} - \mu}
\frac{ \left( \frac{x+\mu}{\lambda} \right)^{n-j-2}}{x^{n-j-1}} dx \\
&=& \frac{1}{(\lambda-\mu)\lambda^{n-j-1}} \sum_{m=0}^{n-j-2} {n-j-2 \choose m} \mu^m  \int_{\lambda-\mu}^{\lambda e^{(\lambda-\mu)t} - \mu}
x^{-m-1} dx \\
&=& \frac{1}{(\lambda-\mu)\lambda^{n-j-1}} \times \\
& &\left[ \ln \left( \frac{\lambda e^{(\lambda-\mu)t} - \mu}{\lambda-\mu}  \right) - \sum_{m=1}^{n-j-2} {n-j-2 \choose m} \frac{\mu^m}{m} \left( \lambda e^{(\lambda-\mu)t} - \mu)^{-m} - (\lambda-\mu)^{-m}  \right) \right].
\end{eqnarray*}
So overall,
\begin{eqnarray*}
\bE[\cA_{n,t}^k]&=& t -  \sum_{i=0}^{k-1} \sum_{j=0}^i {n-1 \choose i} {i \choose j} (-1)^{i+j} \left( \frac{\lambda-\mu e^{-(\lambda-\mu)t}}{1 - e^{-(\lambda-\mu)t}} \right)^{n-j-1} \times \\
& & \left[ g(j)+ \sum_{l=1}^{n-j-1} \sum_{m=0}^{l-1}  {n-j-1 \choose l}  {l-1 \choose m} (-1)^{l+m} \frac{\lambda^{l-1-m}}{(\lambda-\mu) \mu^l} h(j,m) \right]. 
\end{eqnarray*}
For $\mu=\lambda$, we obtain from Equation (\ref{EqnExp}),
\begin{eqnarray*}
\bE[\cA_{n,t}^k] &=& t - \sum_{i=0}^{k-1} \sum_{j=0}^i {n-1 \choose i} {i \choose j} (-1)^{i+j} \left( \frac{1+\lambda t}{t}\right)^{n-j-1}  \int_0^t \left(\frac{s}{1+ \lambda s}\right)^{n-j-1} ds.
\end{eqnarray*}
Substituting $x=1+\lambda s$, we get,
\begin{eqnarray*}
& &  \int_0^t \left(\frac{s}{1+ \lambda s}\right)^{n-j-1} ds \\
&=& \frac{1}{\lambda^{n-j}} \int_1^{1+\lambda t} \left(\frac{x-1}{x}\right)^{n-j-1} dx \\
&=& \sum_{l=0}^{n-j-1} {n-j-1 \choose l} \frac{ (-1)^l}{\lambda^{n-j}} \int_1^{1+\lambda t} x^{-l} dx \\
&=&   \frac{1}{\lambda^{n-j}} \left[  \lambda t  - (n-j-1) \ln(1+\lambda t) + \sum_{l=2}^{n-j-1} {n-j-1 \choose l} (-1)^l \frac{(1+\lambda t)^{-l+1}-1}{1-l} \right].
\end{eqnarray*}
Overall, this is
\begin{eqnarray*}
\bE[\cA_{n,t}^k] &=&  t - \sum_{i=0}^{k-1} \sum_{j=0}^i {n-1 \choose i} {i \choose j} \frac{(-1)^{i+j}}{\lambda^{n-j}} \left( \frac{1+\lambda t}{t}\right)^{n-j-1} \times \\
& & \left[  \lambda t  - (n-j-1) \ln(1+\lambda t) + \sum_{l=2}^{n-j-1} {n-j-1 \choose l} (-1)^l \frac{(1+\lambda t)^{-l+1}-1}{1-l} \right].
\end{eqnarray*}
\hfill $\Box$ 

\bigskip
\noindent
{\it Proof of Theorem \ref{ThmExp}.}
For $\mu=0$ and for $\mu = \lambda$, the expectation is established with Remark \ref{RemDensSpecModels} and Equations (\ref{EqnExpYule}) and (\ref{EqnExpCBP}).
For $\mu \neq 0$ and $\mu \neq \lambda$ we have with Theorem \ref{ThmfAnk},
$$ \bE[\cA_n^k] = \int_0^\infty (k+1) {n \choose k+1 }  \lambda^{n-k} (\lambda-\mu)^{k+2} e^{-(\lambda-\mu)(k+1)s}  \frac{(1- e^{-(\lambda-\mu)s})^{n-k-1}}{(\lambda-\mu  e^{-(\lambda-\mu)s})^{n+1}} s ds. $$
Set \begin{eqnarray*}
C_1 &:=& (k+1) {n \choose k+1 }  \lambda^{n-k} (\lambda-\mu)^{k+2}, \\
f(s) &:=&  e^{-(\lambda-\mu)(k+1)s}  \frac{(1- e^{-(\lambda-\mu)s})^{n-k-1}}{(\lambda-\mu  e^{-(\lambda-\mu)s})^{n+1}}.
\end{eqnarray*}
Therefore,
$$ \bE[\cA_n^k] = C_1 \int_0^\infty f(s) s ds = C_1 [F(s) s]_0^\infty -  C_1 \int_0^\infty F(s) ds$$
where $F(s) := \int f(s) ds$.

In the following, we calculate $F(s)$.
We do the following substitution:
$$ x = \frac{e^{-(\lambda-\mu)s}}{\lambda-\mu e^{-(\lambda-\mu)s} } \qquad \frac{dx}{ds} = - \frac{\lambda(\lambda-\mu) e^{-(\lambda-\mu)s}}{(\lambda - \mu e^{-(\lambda-\mu)s})^2} \qquad e^{-(\lambda-\mu)s} = \frac{\lambda x}{1 + \mu x}
$$
This yields
\begin{eqnarray*}
F(s) &=&  -\frac{1}{\lambda(\lambda-\mu)} \int x^{n-1} \left(\frac{1-(\lambda-\mu) x}{ \lambda x}  \right)^{n-k-1} dx \\
&=& -\frac{1}{\lambda^{n-k}(\lambda-\mu)} \int x^k (1-(\lambda-\mu)x)^{n-k-1} dx \\
&=& -\frac{1}{\lambda^{n-k}(\lambda-\mu)} \sum_{i=0}^{n-k-1} {n-k-1 \choose i} (-(\lambda-\mu))^i  \int x^{k+i} dx \\
&=& \frac{1}{\lambda^{n-k}} \sum_{i=0}^{n-k-1} {n-k-1 \choose i} \frac{(-(\lambda-\mu))^{i-1}}{k+i+1} \left( \frac{e^{-(\lambda-\mu)s}}{\lambda-\mu e^{-(\lambda-\mu)s} }\right)^{k+i+1}.
\end{eqnarray*}
We have $\lim_{s \rightarrow \infty} F(s) s = 0$ and $F(0) \cdot 0 = 0$ and therefore,
$$ \bE[\cA_n^k] =  -  C_1 \int_0^\infty F(s) ds.$$
Substitute $x=\lambda-\mu e^{-(\lambda-\mu)s}$, 
\begin{eqnarray*}
F_2(s) &:=& \int_0^\infty F(s) ds \\
 &=&  \frac{1}{\lambda^{n-k}} \sum_{i=0}^{n-k-1} {n-k-1 \choose i} \frac{(-(\lambda-\mu))^{i-1}}{k+i+1} \int_0^\infty \left( \frac{e^{-(\lambda-\mu)s}}{\lambda-\mu e^{-(\lambda-\mu)s} }\right)^{k+i+1} ds \\
&=& \frac{1}{\lambda^{n-k}} \sum_{i=0}^{n-k-1} {n-k-1 \choose i} \frac{(-(\lambda-\mu))^{i-1}}{k+i+1} \int_{\lambda-\mu}^\lambda \frac{ \left(\frac{\lambda-x}{\mu}\right)^{k+i}  }  {\mu (\lambda-\mu)x^{k+i+1}} dx \\
&=& \frac{1}{\lambda^{n-k}} \sum_{i=0}^{n-k-1} {n-k-1 \choose i} \frac{(\lambda-\mu)^{i-2}}{k+i+1} \frac{(-1)^{i-1}}{\mu^{k+i+1}} \sum_{j=0}^{k+i} {k+i \choose j} \lambda^{j} (-1)^{k+i-j}  \int_{\lambda-\mu}^ \lambda x^{-(j+1)} dx .
\end{eqnarray*}
Evaluating the integral yields
\begin{eqnarray*}
F_2(s) &=& \frac{(-1)^{k}}{\lambda^{n-k}} \sum_{i=0}^{n-k-1} {n-k-1 \choose i} \frac{(\lambda-\mu)^{i-2}}{k+i+1} \frac{1}{\mu^{k+i+1}} \times \\
& & \left[- \log(\frac{\lambda}{\lambda-\mu})  +  \sum_{j=1}^{k+i} {k+i \choose j} \lambda^j \frac{(-1)^{j}}{j}   [\lambda^{-j} - (\lambda-\mu)^{-j} \right].
\end{eqnarray*}
Therefore, with $\rho := \mu / \lambda$,
\begin{eqnarray*} \bE[\cA_n^k] &=& (k+1) {n \choose k+1 } (-1)^{k} \sum_{i=0}^{n-k-1} {n-k-1 \choose i} \frac{(\lambda-\mu)^{k+i}}{k+i+1} \frac{1}{\mu^{k+i+1}} \times \\
& & \left[ \log \left( \frac{\lambda}{\lambda-\mu}  \right)  -  \sum_{j=1}^{k+i} {k+i \choose j} \frac{(-1)^{j}}{j}  \left(1 - \left(\frac{\lambda}{\lambda-\mu}\right)^{j} \right)  \right] \\
&=&  \frac{k+1}{\lambda} {n \choose k+1 } (-1)^{k} \sum_{i=0}^{n-k-1} {n-k-1 \choose i} \frac{1}{(k+i+1) \rho} \left(\frac{1}{\rho} -1 \right)^{k+i} \times \\
& & \left[ \log \left( \frac{1}{1-\rho}  \right)  -  \sum_{j=1}^{k+i} {k+i \choose j} \frac{(-1)^{j}}{j}  \left(1 - \left(\frac{1}{1-\rho}\right)^{j} \right)  \right]
\end{eqnarray*}
which establishes the theorem.
\hfill $\Box$

\end{document}